\declaretheoremstyle[headfont=\normalsize\normalfont\bfseries,notefont=\mdseries, notebraces={(}{)},bodyfont=\normalfont,postheadspace=0.5em]{basicstyle}
\declaretheoremstyle[headfont=\normalsize\normalfont\bfseries,notefont=\mdseries,
notebraces={(}{)},bodyfont=\normalfont\itshape,postheadspace=0.5em]{italstyle}
\declaretheorem[style=italstyle,name=Theorem,numberwithin=section]{theorem}
\declaretheorem[style=italstyle,name=Corollary,sibling=theorem]{cor}
\declaretheorem[style=italstyle,name=Claim,sibling=theorem]{claim}
\declaretheorem[style=italstyle,name=Lemma,sibling=theorem]{lemma}
\newcommand{\abs}[1]{|#1|}
\newcommand{\bd}{\partial}
\newcommand{\colim}{\mathop{\mathrm{colim}}}
\newcommand{\C}{\mathbb{C}}
\renewcommand{\d}{d}
\newcommand{\id}{\mathrm{id}}
\newcommand{\intprod}{\mathbin{{\tikz{\draw[line width=0.7pt](-0.1,0)--(0.1,0)--(0.1,0.2)}\hspace{0.5mm}}}}
\newcommand{\pd}[2]{\frac{\partial #1}{\partial #2}}
\newcommand{\R}{\mathbb{R}}
\def\@secnumfont{\bfseries}
\renewcommand\section{\@startsection{section}{1}{0pt}{-3.5ex \@plus -1ex \@minus -.2ex}{2.3ex \@plus.2ex}{\centering\itshape}}
\newcommand{\set}[1]{\left\{#1\right\}}
\renewcommand{\subsection}{\@startsection{subsection}{2}\z@{.5\linespacing\@plus.7\linespacing}{-.5em}{\normalfont\itshape}}
\renewcommand{\paragraph}{\@startsection{paragraph}{4}\z@{.5\linespacing\@plus.7\linespacing}{0em}{\normalfont\itshape}}
\newcommand{\ud}[2]{\frac{\mathrm{d} #1}{\mathrm{d} #2}}
\newcommand{\Z}{\mathbb{Z}}
\title{Selective Floer cohomology for contact vector fields}
\author{Dylan Cant}
\author{Igor Uljarevi{\'c}}
\date{\today}
\begin{document}
\begin{abstract}
  This paper associates a persistence module to a contact vector field $X$ on the ideal boundary of a Liouville manifold. The persistence module measures the dynamics of $X$ on the region $\Omega$ where $X$ is positively transverse to the contact distribution. The colimit of the persistence module depends only on the domain $\Omega$ and is a variant of the selective symplectic homology introduced by the second named author. As an application we prove existence of positive orbits for certain classes of contact vector fields. Another application of this invariant is that we recover the famous non-squeezing result of Eliashberg, Kim, and Polterovich.
\end{abstract}

\maketitle

\section{Introduction}
\label{sec:introduction}

The goal in this paper is to associate a persistence module of Floer cohomology groups to a contact vector field $X$ on the ideal contact boundary $Y$ of a Liouville manifold $W$. The Floer cohomology group is a special case of the cohomology groups considered in \cite{merry-uljarevic,djordjevic_uljarevic_zhang,cant-sh-barcode,cant-hedicke-kilgore} which are associated to arbitrary contact isotopies of $Y$.

We briefly sketch the construction. Given an auxiliary contact form $\alpha$ one can associate to $X$ the \emph{contact Hamiltonian} $h=\alpha(X)$. Let $\mu_{\delta}$ be a cut-off function of the form illustrated in Figure \ref{fig:mu}.

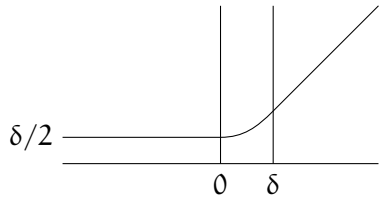
\begin{figure}[H]
  \centering
  \begin{tikzpicture}[scale=0.7]
    \draw (-3,0.5)node[left]{$\delta/2$} -- (0,0.5) to[out=0,in=225] (1,1)--(3,3);
    \draw (-3,0)--(3,0);
    \draw (0,0) node [below] {$0$}-- (0,3) (1,0)node [below]{$\delta$}--(1,3);
  \end{tikzpicture}
  \caption{The convex and positive cut-off function $\mu_{\delta}$ is required to satisfy $\mu_{\delta}(x)=x$ for $x\ge \delta$.}
  \label{fig:mu}
\end{figure}

The cut-off function $\mu_{\delta}(h)$ generates a new contact vector field $X_{\delta}^{\alpha}$ via the equation $\alpha(X_{\delta}^{\alpha})=\mu_{\delta}(h),$ and we let $HF(X_{\delta}^{\alpha};s)$ denote the Floer cohomology group associated to the contact isotopy obtained as the time-$s$ flow of $X_{\delta}^{\alpha}$. For $\delta$ sufficiently small, this new contact vector field ``selects'' the dynamics of $X$ only on its positive region (note that $X=X_{\delta}^{\alpha}$ on the region where $h\ge \delta$).

Our invariant, denoted $Q(X;s)$, is defined as an inverse limit (over continuation maps) as $\delta\to 0$ of the groups $HF(X_{\delta}^{\alpha};s)$; the precise construction is given in \S\ref{sec:defin-select-floer}. It is shown that $Q(X;s)$ is independent of the choice of contact form.

For our main application, it is useful to separate orbits by their free homotopy classes. Let us therefore fix $\kappa$ to be a collection of connected components in the free loop space of $Y$. Because Floer cohomology ultimately is defined inside the filling, we require $\kappa$ satisfies the following condition we call \emph{saturation}:
\begin{equation*}
  \kappa=i^{-1}(i(\kappa)),
\end{equation*}
where $i:Y\to W$ is the inclusion of the ideal boundary. We will refer to $\kappa$ as a \emph{saturated free homotopy class}. Given such a choice, we define a refined version of the invariant, denoted $Q(X;s;\kappa)$.

The vector spaces $Q(X;s;\kappa)$ are shown to form a persistence module with respect to $s\in [0,\infty)$; see \cite{polterovich_shelukhin_persistence_1,persistence_book} for earlier uses of persistence modules in symplectic topology. Assuming a dynamical condition called \emph{non-resonance}, we prove that the persistence module is supported on the lengths of positive orbits of $X$:
\begin{theorem}\label{theorem:pmod-barcode}
  Suppose that $X$ is non-resonant relative $\kappa$ and has no positive orbits with period in $[s_{0},s_{1}]\subset [0,\infty)$ in the saturated free homotopy class $\kappa$. Then the continuation map:
  \begin{equation*}
    Q(X;s_{0};\kappa)\to Q(X;s_{1};\kappa)
  \end{equation*}
  is an isomorphism.
\end{theorem}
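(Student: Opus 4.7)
The plan is to reduce the assertion about the inverse-limit invariant $Q$ to one about the Floer cohomology groups $HF(X_\delta^\alpha; s; \kappa)$ at individual small $\delta$, and then to invoke the standard Floer-theoretic principle that a continuation map in a time parameter is an isomorphism provided no new $1$-periodic orbits appear along the interpolating homotopy. Concretely, since $Q(X;s;\kappa)$ is defined as an inverse limit over $\delta \to 0$, it will suffice to exhibit a cofinal sequence $\delta_n \downarrow 0$ for which the map
\begin{equation*}
  HF(X_{\delta_n}^\alpha; s_0; \kappa) \longrightarrow HF(X_{\delta_n}^\alpha; s_1; \kappa)
\end{equation*}
is an isomorphism compatible with the $\delta$-continuation maps in the inverse system. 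Compatibility is the usual two-parameter continuation naturality, and passage to the inverse limit then yields the desired isomorphism.

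Next, at fixed $\delta$, I would classify the generators of the Floer complex computing $HF(X_\delta^\alpha; r; \kappa)$ for $r \in [s_0, s_1]$: they correspond to closed orbits of the time-$r$ flow of $X_\delta^\alpha$ in the class $\kappa$ and split into two types. Type (i) are orbits contained in $\set{h \geq \delta}$, where $X_\delta^\alpha = X$; these are exactly the positive closed orbits of $X$ of period $r$ in $\kappa$ and are excluded for every $r\in[s_0,s_1]$ by hypothesis. Type (ii) are orbits meeting $\set{h < \delta}$, where $\mu_\delta(h) \in [\delta/2,\delta)$. In the flat region $\set{h \leq \delta/2}$, the vector field $X_\delta^\alpha$ is $(\delta/2)$ times the Reeb vector field of $\alpha$, so its period-$r$ orbits correspond to Reeb orbits of period $r\delta/2$, and orbits meeting the transitional annulus $\set{\delta/2 < h < \delta}$ are small perturbations of these. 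The non-resonance hypothesis is invoked to rule out type (ii): the set of Reeb periods realized in the class $\kappa$ is discrete, so once $\delta$ is small the scaled interval $[s_0 \delta/2,\, s_1 \delta/2]$ avoids it, and one can select a cofinal sequence $\delta_n \downarrow 0$ with this property; transitional orbits are then excluded by requiring the cut-off profile $\mu_\delta$ to be sufficiently close to the piecewise linear model.

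With both types of orbits absent for all $r \in [s_0, s_1]$, a monotone homotopy in the time parameter produces a continuation map with no bifurcations, and a standard cobordism argument shows it is a chain-level isomorphism. The step I expect to be the main obstacle is the control of type (ii) orbits: one must check that the non-resonance condition, as formulated in the body of the paper, precisely excludes all Reeb-type generators in $\kappa$ uniformly along a cofinal sequence of $\delta$'s, and that the cut-off $\mu_\delta$ can be arranged so no spurious generators arise in the transition region $\set{\delta/2 < h < \delta}$ for any $r \in [s_0, s_1]$. A secondary point, essentially bookkeeping, is verifying the commutativity of the two-parameter continuation squares needed to identify inverse limits on both sides.
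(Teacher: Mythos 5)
Your overall reduction—from the inverse-limit object $Q$ to the individual $HF(X_\delta^\alpha; s; \kappa)$ via two-parameter continuation naturality, then showing no new $1$-periodic orbits appear—is the same skeleton the paper uses. But the central step, ruling out what you call type~(ii) orbits, is where the argument goes wrong.

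First, a minor error: the ``flat'' region where $X_\delta^\alpha=(\delta/2)R^\alpha$ is $\set{h\le 0}$, not $\set{h\le \delta/2}$; the transition is in $\set{0\le h\le \delta}$. Second, and more seriously, you have misidentified what non-resonance is and what it is for. Non-resonance is not a statement about the period spectrum of $R^\alpha$; the discreteness of Reeb periods is a separate fact (Sard's theorem, used in \S\ref{sec:genericity} to make the Floer groups well-defined). Non-resonance is a transversality condition between $R^\alpha$ and the dividing hypersurface $\Sigma=\set{h=0}$ along closed leaves of the characteristic foliation of $\Sigma$. Third, and fatally, orbits meeting the transition region are \emph{not} small perturbations of Reeb orbits and cannot be killed by making $\mu_\delta$ close to a piecewise linear model. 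On $\set{0<h<\delta}$ one has $X_\delta^\alpha=\mu_\delta(h)R^\alpha+\mu_\delta'(h)V$ with $V\in\xi$ directing the characteristic foliation of the nearby level sets of $h$, and $\mu_\delta'(h)$ is of order $1$ there. As $\delta\to 0$ these orbits can limit onto closed leaves of the characteristic foliation of $\Sigma$, which have nothing to do with $R^\alpha$-orbits; sharpening the corner of $\mu_\delta$ does not remove them.

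The mechanism the paper actually uses (Lemma \ref{lemma:mu-approach-barcode}) is a compactness/contradiction argument: a sequence of extra orbits with $\delta_n\to 0$ is shown, after Gronwall estimates, to converge to a closed characteristic orbit on $\Sigma$ in class $\kappa$; non-resonance forces $R^\alpha$ transverse to $\Sigma$ along that limit, hence $\d h(R^\alpha)$ has a fixed sign near it; since $\d h(X_\delta^\alpha)=\mu_\delta(h)\,\d h(R^\alpha)$ with $\mu_\delta(h)\ge \delta/2>0$, the function $h$ is strictly monotone along the alleged closed orbit, a contradiction. Your proposal needs this limiting analysis of the transition-region dynamics; as written it only rules out orbits confined to the Reeb-flat region and hand-waves the rest.
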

Here a \emph{positive orbit} is an orbit of $X$ which is positively transverse to the contact distribution.

Non-resonance is a condition on the characteristic foliation of the hypersurface where $X$ is tangent to the contact distribution; the precise formulation is given in \S\ref{sec:non-resonance}. For the purposes of the introduction, let us note that:
\begin{enumerate}
\item non-resonance assumes the contact Hamiltonian $h=\alpha(X)$ cuts out its zero level set $\Sigma=\set{h=0}$ transversally,
\item assuming (1), non-resonance only depends on $\Sigma$,
\item if $\Sigma$ is \emph{convex} (in the sense of, e.g., \cite[\S2]{salamon-lecture-notes-convex-hypersurfaces}), then $X$ is non-resonant,
\item if the characteristic foliation of $\Sigma$ has no closed orbits in the class of $\kappa$, then $X$ is non-resonant.
\end{enumerate}
It follows from the construction and well-known results for Floer cohomology in Liouville manifolds that:
\begin{equation*}
  Q(X;0;\kappa)=\lim_{\delta\to 0}\mathrm{HF}(R^{\alpha}_{\delta t};\kappa)\simeq \left\{
    \begin{aligned}
      &H^{*}(W;\Z/2)\text{ if $\kappa$ contains constant loops},\\
      &0\text{ otherwise.}
    \end{aligned}
  \right.
\end{equation*}
At the other end of the persistence module we have an invariant closely related to the symplectic selective (co)homology introduced in \cite{uljarevic-ssh}. Introduce:
\begin{equation*}
  Q(X;\infty;\kappa):=\colim_{s\to \infty}Q(X;s;\kappa).
\end{equation*}
We will prove in \S\ref{sec:defin-select-floer} that there are continuation isomorphisms:
\begin{equation*}
  Q(X_{1};\infty;\kappa)\simeq Q(X_{2};\infty;\kappa)
\end{equation*}
provided that $\set{h_{1}\ge 0}=\set{h_{2}\ge 0}$ and each $h_{i}=\alpha(X_{i})$ cuts out its zero level set transversally. Thus, given any domain $\Omega\subset Y$ we define:
\begin{equation*}
  Q(\Omega;\kappa):=\lim_{X}Q(X;\infty;\kappa);
\end{equation*}
the limit is over vector fields $X$ whose positive region is $\Omega$ and which cut out the zero level set transversally, in which case we say $X$ is \emph{adapted to $\Omega$}. The limit is formal in the sense that the map from the limit to any representative is an isomorphism.

Our invariant $Q(\Omega;\kappa)$ is closely related to the selective symplectic homology introduced by the second named author in \cite{uljarevic-ssh}; however, the construction has certain mild differences, e.g., in this paper $\Omega$ is a compact domain rather than an open set. We discuss further comparison with \cite{uljarevic-ssh} in \S\ref{sec:comp-with-select}.

With these definitions settled, we can state an important result:
\begin{theorem}\label{theorem:main}
  Let $X$ be a non-resonant contact vector field adapted to $\Omega$. If $\kappa$ is non-trivial (contains only non-contractible orbits) and $Q(\Omega;\kappa)$ is non-zero, then $X$ has a positive orbit in the class $\kappa$. If $\kappa$ is trivial (contains contractible orbits) and the natural map $H^{*}(W;\Z/2)\to Q(\Omega;\kappa)$ is not an isomorphism, then $X$ has a positive orbit in the class of $\kappa$.
\end{theorem}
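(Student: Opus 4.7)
The plan is to prove the contrapositive of both statements simultaneously, leveraging Theorem \ref{theorem:pmod-barcode} to propagate the initial value $Q(X;0;\kappa)$ (already identified in the introduction) through the persistence module all the way to $Q(\Omega;\kappa)$ in the absence of positive orbits.

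Suppose $X$ has no positive orbit in the class $\kappa$. A positive orbit has strictly positive period, so for every $s>0$ the hypothesis of Theorem \ref{theorem:pmod-barcode} is met on the interval $[0,s]$, and the continuation map $Q(X;0;\kappa)\to Q(X;s;\kappa)$ is an isomorphism. These continuation maps are compatible with further continuation in $s$, so passing to the colimit I conclude that
\begin{equation*}
  Q(X;0;\kappa)\to \colim_{s\to\infty}Q(X;s;\kappa)=Q(X;\infty;\kappa)=Q(\Omega;\kappa)
\end{equation*}
is also an isomorphism (a directed colimit of isomorphisms is an isomorphism). Now I substitute the values of $Q(X;0;\kappa)$ recorded in the introduction. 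If $\kappa$ is non-trivial then $Q(X;0;\kappa)=0$, forcing $Q(\Omega;\kappa)=0$, which is the contrapositive of the first claim. If $\kappa$ is trivial then $Q(X;0;\kappa)\simeq H^{*}(W;\Z/2)$ via the canonical identification, and the composition with the colimit map is the natural map $H^{*}(W;\Z/2)\to Q(\Omega;\kappa)$, which must therefore be an isomorphism, proving the contrapositive of the second claim.

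The main obstacle I anticipate is the bookkeeping at the end: one must verify that the ``natural map'' appearing in the theorem statement is indeed computed by the composition of the canonical identification $H^{*}(W;\Z/2)\simeq Q(X;0;\kappa)$ with the colimit map $Q(X;0;\kappa)\to Q(\Omega;\kappa)$ produced above. This coherence is presumably built into the construction of $Q(\Omega;\kappa)$ and of the natural map, but confirming it requires carefully unwinding the $\delta\to 0$ and $s\to\infty$ limits in \S\ref{sec:defin-select-floer} and checking naturality under continuation in both parameters. Once this compatibility is in place, the rest of the argument is a formal consequence of Theorem \ref{theorem:pmod-barcode}.
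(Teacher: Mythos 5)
Your proposal is correct and is precisely the argument the paper intends when it states the theorem is ``a direct corollary of Theorem \ref{theorem:pmod-barcode} and the above definitions.'' You have simply unpacked the formal steps (apply Theorem \ref{theorem:pmod-barcode} on every interval $[0,s]$, pass to the colimit, and substitute the value of $Q(X;0;\kappa)$ from the introduction) that the paper leaves implicit.
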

\begin{proof}
  This is a direct corollary of Theorem \ref{theorem:pmod-barcode} and the above definitions.
\end{proof}

This result should be thought of as the generalization of the famous result of \cite{viterbo_functors_and_computations_1} on Reeb vector fields to a larger class of vector fields.

In view of this theorem, it is worthwhile to compute the invariants $Q(\Omega;\kappa)$ in various settings, as the answer may imply the existence of positive orbits of any contact vector field $X$ adapted to $\Omega$.o

\subsection{The case of a contact Darboux ball}
\label{sec:case-darboux-ball}

Our first example where $Q(\Omega;\kappa)$ can be computed is when $\Omega$ is a contact Darboux ball, i.e.,
\begin{equation*}
  \Omega=\set{q^{2}+p^{2}+z^{2}\le 1}\subset \R\times \C^{n}.
\end{equation*}
with the contact form $\d z+\lambda$ where $\lambda$ is the radial Liouville form on $\C^{n}$. Our method of computation is to find a contact vector field $X$ adapted to $\Omega$ which has no closed positive orbits. A straightforward computation shows that:
\begin{equation*}
  h=1-z^{2}-p^{2}-q^{2}
\end{equation*}
is the contact Hamiltonian for a vector field $X=k\pd{}{z}+V,$ where $V$ is tangent to the level sets $\set{z=\text{const}}$ and:
\begin{enumerate}
\item $V\intprod \d p\wedge \d q=2p\d p+2q\d q-2 z \lambda$,
\item $k+\lambda(V)=h$.
\end{enumerate}
Insert the radial Liouville vector field $Z$ into both sides of the first equation to conclude:
\begin{equation*}
  -\lambda(V)=2p\d p(Z)+2q\d q(Z)\implies \lambda(V)\le 0.
\end{equation*}

It follows easily that $k\ge h$ and hence $X$ has no positive orbits, since $h>0$ holds on the region where $X$ is positive. Because the boundary of the ball is convex, $X$ is non-resonant, and hence Theorem \ref{theorem:main} implies that the natural morphism:
\begin{equation*}
  H^{*}(W;\Z/2)\to Q(\Omega;\kappa)
\end{equation*}
is an isomorphism when $\kappa$ is a trivial class (if $\kappa$ is nontrivial then $Q(\Omega;\kappa)$ vanishes because every loop in a ball is contractible). This isomorphism also follows from \cite[\S6]{uljarevic-ssh} and the comparison in \S\ref{sec:comp-with-select}.

\subsection{The prequantization of a symplectic Darboux ball}
\label{sec:case-prequantization-darboux-ball}

Consider $\R/\Z\times \C^{n}$, with coordinates $(\theta,z)$, and with the prequantization contact form:
\begin{equation}
  \label{eq:prequantization-form}
  \alpha=\d\theta+\lambda,
\end{equation}
where $\lambda$ is the radial Liouville form on $\C$. It is well-known that, for any $R>0$, $\R/\Z\times B(R)$ can be contactomorphically embedded into $\R/\Z\times B(\epsilon)$ for arbitrarily small $\epsilon>0$; see \cite{ekp,cant_nonsqueezing}. Here $B(a)$ denotes the ball of symplectic capacity $a$.

We will show in \S\ref{sec:prequant-sympl-ellip} that there is some Liouville manifold $W$ which contains an embedding of $\R/\Z\times B(R)$ into its ideal boundary $Y$ which has the following properties:

\begin{enumerate}
\item\label{item:knot-condition} the knot $K=\R/\Z\times \set{0}$ is not homotopic to any of its iterates in $W$; in particular, the saturation $\kappa$ of $K$ is non-trivial,
\item $Q(\R/\Z\times B(a);\kappa)\simeq \Z/2\oplus \Z/2$ if $a>1$,
\item\label{sec:item-3} the continuation map: $$Q(\R/\Z\times B(a);\kappa)\to Q(\R/\Z\times B(R);\kappa)$$ is an isomorphism if $1<a\le R$.
\end{enumerate}
Here we appeal to an additional structure we have yet to mention so far: for any inclusion of domains $\Omega_{1}\subset \Omega_{2}$ there is a map $Q(\Omega_{1};\kappa)\to Q(\Omega_{2};\kappa)$ induced by continuation maps. Moreover, the assignment $\Omega\mapsto Q(\Omega;\kappa)$ is functorial with respect to these continuation maps. This additional structure is explained in \S\ref{sec:select-sympl-cohom}.

This set-up allows us to conclude:
\begin{theorem}\label{theorem:existence}
  Let $\Omega\subset \R/\Z\times \C$ be any compact domain which contains the compact domain $\R/\Z\times B(1)$ in its interior. Suppose that $\bd\Omega$ is non-resonant. Then any contact vector field $X$ adapted to $\Omega$ has a positive orbit in the free homotopy class of $\R/\Z\times \set{0}$.
\end{theorem}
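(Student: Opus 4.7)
The plan is to sandwich $\Omega$ between two prequantization slabs of the form $\R/\Z \times B(a)$ and $\R/\Z \times B(R)$ with $1 < a \le R$, exploit the functoriality of the assignment $\Omega \mapsto Q(\Omega;\kappa)$, and then invoke Theorem \ref{theorem:main}.

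First, since $\Omega$ is compact, I would choose $R>1$ large enough that $\Omega$ is contained in the image of the embedding $\R/\Z \times B(R) \hookrightarrow Y$ produced in \S\ref{sec:prequant-sympl-ellip}, and take $W$ to be the corresponding Liouville filling. The hypothesis that $\R/\Z \times B(1)$ lies in the interior of $\Omega$ then furnishes, by compactness, some $a$ slightly larger than $1$ with $\R/\Z \times B(a) \subset \Omega$. The resulting chain of inclusions
\[ \R/\Z \times B(a) \subset \Omega \subset \R/\Z \times B(R) \]
induces, by the functoriality of $Q$ with respect to inclusions of domains, a factorization
\[ Q(\R/\Z \times B(a); \kappa) \to Q(\Omega; \kappa) \to Q(\R/\Z \times B(R); \kappa). \]
Property \eqref{sec:item-3} above asserts that this composition is an isomorphism, while item (2) identifies both end terms with $\Z/2 \oplus \Z/2$. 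In particular, the composition is a nonzero map out of $Q(\R/\Z \times B(a);\kappa)$, which forces $Q(\Omega; \kappa) \neq 0$.

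For the final step, $\kappa$ is non-trivial by \eqref{item:knot-condition}, and the assumed non-resonance of $\bd\Omega$ implies non-resonance of every contact vector field $X$ adapted to $\Omega$, because by the discussion after Theorem \ref{theorem:pmod-barcode} non-resonance only depends on $\Sigma=\{h=0\}=\bd\Omega$. Theorem \ref{theorem:main} then produces the desired positive orbit of $X$ in the class of $\R/\Z\times\{0\}$.

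I do not anticipate any real obstacle in writing this up: the substantive work is packaged into the construction of \S\ref{sec:prequant-sympl-ellip} and its output \eqref{item:knot-condition}--\eqref{sec:item-3}, after which the theorem reduces to a short diagram chase together with one invocation of Theorem \ref{theorem:main}.
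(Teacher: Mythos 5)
Your proposal is correct and follows essentially the same argument as the paper: sandwich $\Omega$ between $\R/\Z\times B(a)$ and $\R/\Z\times B(R)$, use functoriality and property \eqref{sec:item-3} to see the composition $Q(\R/\Z\times B(a);\kappa)\to Q(\Omega;\kappa)\to Q(\R/\Z\times B(R);\kappa)$ is an isomorphism (hence $Q(\Omega;\kappa)\ne 0$), and invoke Theorem \ref{theorem:main}. The extra bookkeeping you supply (choosing $a$ slightly larger than $1$ by compactness, and passing non-resonance from the hypersurface $\bd\Omega$ to any adapted $X$) correctly fills in details the paper leaves implicit.
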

\begin{proof}
  Without loss of generality, suppose $\R/\Z\times B(a)\subset \Omega\subset \R/\Z\times B(R)$ for $1<a\le R$. Embed $\R/\Z\times B(R)$ into the ideal boundary of the aforementioned Liouville manifold $W$, and use this domain to compute the $Q$ groups.

  Because the composition of:
  \begin{equation*}
    Q(\R/\Z\times B(a);\kappa)\to Q(\Omega;\kappa)\to Q(\R/\Z\times B(R);\kappa)
  \end{equation*}
  is an isomorphism, $Q(\Omega;\kappa)$ must be non-zero. Since $\kappa$ is a non-trivial class (contains no contractible orbits), it follows from Theorem \ref{theorem:main} that $X$ has a positive orbit in the class of $\kappa$, as desired.
\end{proof}

We suspect that this existence theorem can be deduced by combining our Lemma \ref{lemma:mu-approach-barcode} with the techniques of \cite{ekp} (which are specifically tailored to prequantizations), or the generating function approach of \cite{sandon-ann-inst-four-2011,fraser-sandon-zhang} which are specifically tailored to the prequantization of $\C^{n}$. It is a natural question as to whether the hypotheses on the non-resonance of $\bd \Omega$ or the requirement that $\Omega$ contains $\R/\Z\times B(1)$ can be relaxed, although we leave this question for future research.

\subsection{A contact vector field with no orbits at all}
\label{sec:contact-vector-field-no-orbits-at-all}

The question which started this project was the following: {\itshape does every contact vector field on a compact contact manifold have a closed orbit?}

It is not so easy to come up with a counterexample; indeed, if $X$ is a contact vector field without orbits, then $X$ must be everywhere non-zero; note that a zero of $X$ counts as an orbit. In this case the dividing hypersurface of $\Sigma$ is cut transversally. Moreover, since $X$ has no orbits, and $X$ directs the characteristic foliation of $\Sigma$ (see \S\ref{sec:char-orbits-divid}), it follows that the characteristic foliation of $\Sigma$ has no closed orbits. Thus $X$ is non-resonant and our invariant can be applied. Up to changing $X$ to $-X$, the question reduces to $X$ having positive orbits, i.e., a generalization of Weinstein's conjecture.

However there are examples of contact vector fields without closed orbits. For instance, one can take any vector field on a closed manifold $N$ without closed orbits, and lift this vector field to a canonical vector field on $Y=ST^{*}N$. There are many examples of such $N$; see, e.g., the work of \cite{kuperberg-annals} for an example when $N=S^{3}$.

We note one curious feature of such examples. Any canonical vector field $X$ is generated by a contact Hamiltonian which is linear in each cotangent fiber. In particular, the anticontact involution $(p,q)\mapsto (-p,q)$ swaps the region where $X$ is positive and where $X$ is negative. Thus there is a certain symmetry between the positive and negative regions. Are there any contact vector fields without closed orbits which do not have such a symmetry?

\subsection{The non-squeezing theorem via selective Floer cohomology}
\label{sec:non-squeezing-via}

Continuing with the set-up of \S\ref{sec:case-prequantization-darboux-ball}, we now explain how the groups $Q(\Omega;\kappa)$ can be used to recover the famous contact non-squeezing result \cite[Theorem 1.2]{ekp}.

Recall that we had introduced a special Liouville manifold $W$, containing an embedded $\R/\Z\times B(R)$, which satisfied properties (1)-(3). In fact, the $W$ we consider satisfies an additional property:
\begin{enumerate}[resume]
\item\label{sec:item-4} the continuation morphism: $$Q(\R/\Z\times E(c,R,\dots,R);\kappa)\to Q(\R/\Z\times E(a,R,\dots,R);\kappa)$$ is zero if $c<1<a\le R$.
\end{enumerate}
Here we introduce the standard symplectic ellipsoid:
\begin{equation*}
  E(a_{1},\dots,a_{n})=\set{\sum a_{i}^{-1}\pi \abs{z_{i}}^{2}\le 1}.
\end{equation*}
As we will explain in \S\ref{sec:conj-isom}, the invariants $Q(\Omega;\kappa)$ also satisfy a sort of conjugation invariance:
\begin{lemma}
  If $\psi$ is a contactomorphism of the ideal boundary $Y$ which extends to a symplectomorphism of $W$, then there is an induced natural transformation:
  \begin{equation*}
    \psi_{*}:Q(\Omega;\kappa)\to Q(\psi(\Omega);\psi(\kappa));
  \end{equation*}
  the transformation is natural when $Q(-;\kappa)$ and $Q(\psi(-);\psi(\kappa))$ are considered as functors on the category of subdomains of the ideal boundary.
\end{lemma}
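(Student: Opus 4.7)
The plan is to construct $\psi_{*}$ at the level of Floer data and then verify compatibility with each limit and continuation map used to define $Q(\Omega;\kappa)$. Write $\Psi:W\to W$ for the symplectomorphism extending $\psi$. First I would observe that pushforward by $\psi$ sends contact vector fields adapted to $\Omega$ bijectively to contact vector fields adapted to $\psi(\Omega)$; indeed, if $h=\alpha(X)$ is the contact Hamiltonian of an adapted $X$, then $\alpha'=(\psi^{-1})^{*}\alpha$ is a contact form and $\alpha'(\psi_{*}X)=h\circ\psi^{-1}$. Because the cut-off operation is pointwise, $\psi_{*}(X_{\delta}^{\alpha})=(\psi_{*}X)_{\delta}^{\alpha'}$, so the perturbed vector fields used to define the persistence module also match under $\psi_{*}$.

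Next I would invoke the standard naturality of Floer cohomology under conjugation by a symplectomorphism of the filling. Any Floer data $(H,J)$ computing $HF(X_{\delta}^{\alpha};s)$ pushes forward to data $(H\circ\Psi^{-1},\Psi_{*}J)$ computing $HF((\psi_{*}X)_{\delta}^{\alpha'};s)$, and the induced bijection on 1-periodic orbits sends the saturated class $\kappa$ to $\psi(\kappa)$ (this uses the saturation hypothesis, together with the fact that $\psi$ is the restriction of $\Psi$ to the ideal boundary). The resulting isomorphism on $HF(-;s;\kappa)$ commutes with continuation homomorphisms, since pushing forward a monotone homotopy of Floer data by $\Psi$ again yields a monotone homotopy of Floer data.

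Having the isomorphism at the level of $HF(X_{\delta}^{\alpha};s;\kappa)$, I would pass it through the inverse limit in $\delta$ that defines $Q(X;s;\kappa)$, then through the colimit as $s\to\infty$ that defines $Q(X;\infty;\kappa)$, and finally through the formal limit over adapted $X$ that defines $Q(\Omega;\kappa)$. This produces the desired map $\psi_{*}:Q(\Omega;\kappa)\to Q(\psi(\Omega);\psi(\kappa))$. For functoriality in $\Omega$: given $\Omega_{1}\subset\Omega_{2}$, the inclusion-continuation map is defined by a monotone homotopy between vector fields adapted to $\Omega_{1}$ and $\Omega_{2}$; pushing this homotopy forward by $\Psi$ gives the corresponding homotopy on the $\psi(\Omega_{i})$ side, so the relevant square commutes.

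The main obstacle is bookkeeping: one must verify that every auxiliary choice (admissible almost complex structures at infinity, convex cut-off profile $\mu_{\delta}$, monotone homotopies defining both the persistence structure in $s$ and the inverse limit in $\delta$, and the continuation data realizing inclusions $\Omega_{1}\subset\Omega_{2}$) transforms coherently under $\Psi$. Once one checks that $\Psi$ intertwines the categories of admissible Floer data used on the two sides, the asserted naturality follows formally from the functoriality of Floer cohomology under symplectic conjugation.
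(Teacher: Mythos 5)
Your proposal is correct and takes essentially the same route as the paper: define the conjugation isomorphism at the level of Floer cohomology of the filling, observe that it commutes with continuation morphisms, and then pass it through the inverse limit in $\delta$, the colimit in $s$, and the formal limit over adapted vector fields. The paper condenses all this into a reference to the conjugation isomorphisms of \cite[\S5]{uljarevic-ssh} plus a one-line limit-taking argument, whereas you carefully track the change of contact form $\alpha'=(\psi^{-1})^{*}\alpha$, the compatibility $\psi_{*}(X_{\delta}^{\alpha})=(\psi_{*}X)_{\delta}^{\alpha'}$, and the pushforward of monotone homotopies needed for functoriality in $\Omega$; these are exactly the details the paper leaves implicit.
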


As a consequence of the lemma and item (4) we are able to recover the non-squeezing result of \cite{ekp}:
\begin{theorem}\label{theorem:our-non-squeezing}
  Let $W$ be a Liouville manifold for which there is a contact embedding $i:\R/\Z\times B(R)\to \bd W$ satisfying:
  \begin{enumerate}[label=(\roman*)]
  \item\label{item:ns-1} the central knot is not homotopic to any of its iterates in $W$,
  \item\label{item:ns-2} $c_{1}(TW)=0$,
  \item\label{item:ns-3} the central knot is primitive in $Y$, i.e., is not homotopic to any iterated loop,
  \end{enumerate}
  Then there is no contactomorphism $\psi$ of the ideal boundary $\bd W$ so that:
  \begin{enumerate}
  \item $\psi(\R/\Z\times B(a))\subset \R/\Z\times E(c,R,\dots)$,
  \item $\R/\Z\times E(a,R,\dots)\subset \psi(\R/\Z\times B(R))$,
  \end{enumerate}
  if $c<1<a\le R$.
\end{theorem}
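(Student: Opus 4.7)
The plan is to derive a contradiction by identifying a single continuation map in two different ways. Fix $\kappa$ to be the saturation of the central knot $K=\R/\Z\times\{0\}$ as in \S\ref{sec:case-prequantization-darboux-ball}.

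First, chaining conditions (1) and (2) of the theorem with the tautological inclusion $E(c,R,\dots,R)\subset E(a,R,\dots,R)$ yields, inside $Y$, a chain of compact subdomains
\begin{equation*}
  \psi(\R/\Z\times B(a))\subset \R/\Z\times E(c,R,\dots,R)\subset \R/\Z\times E(a,R,\dots,R)\subset \psi(\R/\Z\times B(R)).
\end{equation*}
Functoriality of $Q(-;\kappa)$ with respect to inclusion factors the outermost continuation through the middle continuation $Q(\R/\Z\times E(c,R,\dots,R);\kappa)\to Q(\R/\Z\times E(a,R,\dots,R);\kappa)$, which is zero by item \ref{sec:item-4}. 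Hence the outer continuation $Q(\psi(\R/\Z\times B(a));\kappa)\to Q(\psi(\R/\Z\times B(R));\kappa)$ also vanishes.

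Second, verify that $\psi(\kappa)=\kappa$. The loop $\psi(K)$ lies in $\R/\Z\times E(c,R,\dots,R)$, whose fundamental group is $\Z$ generated by $[K]$; thus $\psi_{*}[K]=d\cdot[K]$ in $\pi_{1}(Y)$ for some $d\in\Z$. Since $\psi$ is a contactomorphism, $\psi_{*}[K]$ is necessarily primitive, and primitivity of $[K]$ in $Y$ (hypothesis \ref{item:ns-3}) then forces $|d|=1$. The case $d=-1$ is reduced to $d=+1$ by a symmetry of the ambient configuration reversing the orientation of the $\R/\Z$-factor while preserving each of the standard domains in sight (or equivalently by enlarging $\kappa$ to be preserved by this orientation reversal). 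Third, invoke the conjugation-invariance lemma for $\psi^{-1}$ together with its naturality under inclusion of subdomains; this identifies the outer continuation of the first step with the continuation map
\begin{equation*}
  Q(\R/\Z\times B(a);\kappa)\to Q(\R/\Z\times B(R);\kappa)
\end{equation*}
induced by $\R/\Z\times B(a)\subset \R/\Z\times B(R)$. By item \ref{sec:item-3} this last map is an isomorphism, and by property (2) of \S\ref{sec:case-prequantization-darboux-ball} its source is the nonzero group $\Z/2\oplus\Z/2$, so the map is nonzero. This contradicts the conclusion of the first step.

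The main obstacle is the class-tracking argument of the second step: one must rule out that $\psi$ sends the free-homotopy class of the central knot to a higher iterate, so that items \ref{sec:item-3} and \ref{sec:item-4} apply to the same saturated class $\kappa$ throughout. Hypothesis \ref{item:ns-2} enters only implicitly to guarantee that the $Q$-groups of \S\ref{sec:case-prequantization-darboux-ball} are well defined and satisfy properties (2)–(4); the promotion of $\psi$ to a symplectomorphism of $W$, needed to invoke the conjugation-invariance lemma, is handled in \S\ref{sec:conj-isom}. Once these points are in place, the rest of the argument is essentially formal manipulation of the functoriality built into selective Floer cohomology.
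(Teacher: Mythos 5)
Your argument establishes the theorem only in the special case where $\psi$ extends to a contact-at-infinity symplectomorphism of $W$. The conjugation-invariance lemma stated in §\ref{sec:non-squeezing-via} and constructed in §\ref{sec:conj-isom} requires exactly such an extension as a hypothesis; that section does \emph{not} show how to extend an arbitrary contactomorphism of $\bd W$ to the filling, so your parenthetical remark that ``the promotion of $\psi$ to a symplectomorphism of $W$ \ldots is handled in §\ref{sec:conj-isom}'' is wrong. Whether a given contactomorphism of the ideal boundary extends to $W$ is precisely the difficulty the paper flags (it is weaker than, but in the same spirit as, asking whether $\psi$ is contact isotopic to the identity, which is open), and the paper therefore proves the extendable case first and then gives a separate ad hoc argument in §\ref{sec:proof-non-squeezing} to drop the extendability assumption. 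Your second step (showing $\psi(K)\sim K^{\pm1}$ and reducing $d=-1$ to $d=+1$) is a correct use of hypothesis \ref{item:ns-3} and matches the paper; but your third step, which invokes naturality of $\psi^{-1}$-conjugation to identify $Q(\psi(\Omega_2);\kappa)\to Q(\psi(\Omega_3);\kappa)$ with $Q(\Omega_2;\kappa)\to Q(\Omega_3;\kappa)$, has no justification without the extension.

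To close the gap, the paper's §\ref{sec:proof-non-squeezing} avoids conjugation isomorphisms altogether and instead transports the \emph{geometric} input. One pushes forward by $\psi$ a non-decreasing, non-resonant family of contact vector fields adapted to $\Omega(a)$ and $\Omega(R)$; since non-resonance, positivity, and the absence of orbits in a given class are intrinsic to the contact vector field on $\bd W$, they are preserved by the boundary contactomorphism alone. One then re-runs the isomorphism criterion of §\ref{sec:criterion-for-isomorphism} (via Lemma \ref{lemma:mu-approach-barcode}) and the local Floer computation of Lemma \ref{lemma:non-vanishing-result} directly for the embedded domain $\psi(\R/\Z\times B(R))\subset\bd W$, concluding that $Q(\psi(\Omega(a));\kappa)\to Q(\psi(\Omega(R));\kappa)$ is a nonzero isomorphism without ever comparing it to $Q(\Omega(a);\kappa)\to Q(\Omega(R);\kappa)$. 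Sandwiching the vanishing continuation map $Q(E(c,R,\dots);\kappa)\to Q(E(a,R,\dots);\kappa)$ then gives the contradiction. As written, your proof is incomplete because it silently assumes $\psi$ extends to $W$.
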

\begin{proof}
  We first prove the case when $\psi$ is extendable to the filling. In this case we do not need to assume the central knot is primitive in $Y$.

  Let $\Omega_{1}=\R/\Z\times E(c,R,\dots)$, $\Omega_{2}=\R/\Z\times B(a)$, $\Omega_{3}=\R/\Z\times B(R)$, and let $\kappa$ denote the (saturated) free homotopy class of the central orbit.

  If there were such a $\psi$, we would have a commutative diagram:
  \begin{equation*}
    \begin{tikzcd}
      {Q(\Omega_{2};\kappa)}\arrow[d,"{\simeq}"]\arrow[rr,"{}"] &{}&{Q(\Omega_{3};\kappa)}\arrow[d,"{\simeq}"]\\
      {Q(\psi(\Omega_{2});\kappa)}\arrow[r,"{}"] &{Q(\Omega_{1};\kappa)}\arrow[r,"{0}"] &{Q(\psi(\Omega_{3});\kappa),}
    \end{tikzcd}
  \end{equation*}
  where the vertical maps are conjugation isomorphisms and the horizontal morphisms are continuation maps. The diagram is commutative because the conjugation isomorphisms are natural transformations. However, the top vertical map is an isomorphism by item \eqref{sec:item-3} while the lower map is clearly zero by item \eqref{sec:item-4}. This gives the desired contradiction.
\end{proof}

We note that \cite{ekp} does not have the requirement that $\psi$ extends to a symplectomorphism of $W$. We should remark that $\psi$ extending to a symplectomorphism is weaker than $\psi$ being contact isotopic to the identity. It is not known whether there are ``exotic'' contactomorphisms of $\R/\Z\times B(R)$ (indeed, such a problem is a sort of contact cousin of the well-known open question of whether there are exotic symplectomorphisms of $B(R)$; Gromov has shown that the latter question has a negative answer in dimension $4$).

It is potentially possible to remove the condition that $\psi$ is extensible to $W$ using certain hypertight contact manifolds; this sometimes allows one to work directly with symplectizations; see \cite{albers-fuchs-merry,meiwes-naef-hypertight}.

In this paper we argue in an ad hoc fashion to remove the assumption that $\psi$ extends to $W$; the argument is given in \S\ref{sec:proof-non-squeezing}.

\subsection{Comparison with the selective symplectic homology}
\label{sec:comp-with-select}

In \cite{uljarevic-ssh}, the \emph{selective symplectic homology}, denoted by $SH_{\Omega}(W)$, is defined via:\footnote{Contrary to \cite{uljarevic-ssh} we use cohomological conventions for defining the Floer differential and continuation morphisms, hence we write $SH_{\Omega}(W)$ rather than $SH^{\Omega}(W)$.}
\begin{equation}\label{eq:colim-lim-sh}
  SH_{\Omega}(W):=\colim_{h\in \mathscr{H}(\Omega)}\lim_{f\in \Pi(h)}HF(h+f),
\end{equation}
where $HF(h+f)$ is the Floer cohomology of the time $1$ map of the contact vector field associated to the contact Hamiltonian $h+f$, using a fixed contact form on the ideal boundary; here:
\begin{enumerate}
\item $\mathscr{H}(\Omega)$ is a family of contact Hamiltonian with compact support in $\Omega$; the colimit is defined by sending $h$ to $+\infty$.
\item $\Pi(h)$ is a family of perturbations; the limit over $f\in \Pi(h)$ is computed by sending $f$ to zero.
\end{enumerate}
From this definition, and abstract nonsense, there is a well-defined continuation morphism:
\begin{equation*}
  SH_{\mathrm{Int}(\Omega)}(W)\to Q(\Omega)\to SH(W).
\end{equation*}
However, because $Q(\Omega)$ uses contact Hamiltonians with a non-zero derivative on the boundary $\bd\Omega$ and $SH_{\mathrm{Int}(\Omega)}(W)$ uses contact Hamiltonians $h$ with compact support, an inverse morphism $Q(\Omega)\to SH_{\mathrm{Int}(\Omega)}(W)$ is not clearly well-defined via continuation.

However, if $U$ is any open set which contains $\Omega$ in its interior, then a continuation morphism:
\begin{equation*}
  Q(\Omega)\to SH_{U}(W)
\end{equation*}
is well-defined; one can therefore conclude a factorization:
\begin{equation*}
  SH_{\mathrm{Int}(\Omega)}\to Q(\Omega)\to \lim_{\Omega\subset U}SH_{U}(W)\to SH(W);
\end{equation*}
whether or not the first two morphisms are isomorphisms seems to be a slightly subtle question about the characteristic foliation on $\bd \Omega$ which we defer to future research.

\subsubsection{Comparison for convex domains}
\label{sec:comp-conv-doma}

One thing which is fairly obvious is that, if $\bd \Omega$ is \emph{convex}, then the morphism $SH_{\mathrm{Int}}(\Omega)\to Q(\Omega)$ is an isomorphism. This can be seen as follows: let $\Omega_{\sigma}$, $\sigma\in (-\epsilon,\epsilon)$, be the family obtained by flowing by a contact vector field which is outwarldy transverse to $\bd\Omega$. The continuation map $Q(\Omega_{-\epsilon})\to Q(\Omega)$ is an isomorphism, as follows from the argument in \S\ref{sec:criterion-for-isomorphism}; since there is a factorization:
\begin{equation*}
  Q(\Omega_{-\epsilon})\to SH_{\mathrm{Int}(\Omega)}(W)\to Q(\Omega),
\end{equation*}
one concludes that $SH_{\mathrm{Int}(\Omega)}(W)\to Q(\Omega)$ is surjective. Injectivity is proved in a similar fashion, using that continuation $SH_{\mathrm{Int}(\Omega)}(W)\to SH_{\mathrm{Int}(\Omega_{\epsilon})}(W)$ is an isomorphism; see \cite{uljarevic-ssh}.

\subsubsection{Existence of positive orbits}
\label{sec:exist-posit-orbits}

The comparison with $SH_{\mathrm{Int}(\Omega)}(W)$ and results in \cite{uljarevic-ssh} allow one to use Theorem \ref{theorem:existence} to conclude the existence of positive orbits in certain cases.

For instance, if $\Omega$ is the complement of tubular neighborhood of an \emph{immaterial transverse knot} then it is shown in \cite[Theorem 7.4]{uljarevic-ssh} that:
\begin{equation*}
  SH_{\mathrm{Int}(\Omega)}(W)\to SH(W)
\end{equation*}
is surjective. Because of the factorization $SH_{\mathrm{Int}(\Omega)}(W)\to Q(\Omega)\to SH(W)$, one concludes that $Q(\Omega)\to SH(W)$ is also surjective.

Consequently, if $H^{*}(W)\to SH(W)$ is \emph{not} surjective, then $H^{*}(W)\to Q(\Omega)$ cannot be an isomorphism and hence Theorem \ref{theorem:existence} implies any contact vector field adapted to $\Omega$ has positive orbits.

\subsection{Acknowledgements}
\label{sec:acknowledgements}

The authors benefitted from enlightening discussions with Y.~Eliashberg, E.~Kilgore, E.~Shelukhin, and J.~Zhang. The first named author was support by funding from the CIRGET research group and the Fondation Jacques Courtois. The second named author was supported by the Science Fund of the Republic of Serbia, grant no.~7749891, Graphical Languages - GWORDS.

\section{Non-resonant contact vector fields}
\label{sec:non-resonant-contact-vector-fields}

In this section we define \emph{non-resonance}. As explained below, non-resonance of $X$ is essentially a property about the dividing hypersurface $\Sigma$ of $X$, assuming $\Sigma$ is cut transversally by $\alpha(X)$. We show in \S\ref{sec:convex-hypersurfaces} that $X$ is non-resonant provided $\Sigma$ is a convex hypersurface.

\subsection{Free homotopy classes of orbits}
\label{sec:free-homot-class}

It is important in our applications to refine non-resonance by a free homotopy class of orbits. We call any collection $\kappa$ of connected components of the free loop space of $Y$ a \emph{free homotopy class}. In particular, we do not require $\kappa$ to be a single connected component.

\subsubsection{Saturated classes}
\label{sec:saturated-classes}

If $Y$ is the ideal boundary of a convex-at-infinity symplectic manifold $W$, then we say that $\kappa$ is \emph{saturated} provided that:
\begin{equation}\label{eq:saturation}
  \kappa=i^{-1}(i(\kappa)),
\end{equation}
where $i:\pi_{0}(\Lambda Y)\to \pi_{0}(\Lambda W)$ is the map induced by the canonical-up-to-homotopy inclusion of $Y$ into $W$ as a starshaped hypersurface. In other words, we require that $\gamma_{1},\gamma_{2}$ both lie in $\kappa$ if and only if $i(\gamma_{1})$ and $i(\gamma_{2})$ both lie in $i(\kappa)$.

If $K$ is some specific loop, then $i^{-1}(i(\set{K}))$ is called the \emph{saturation} of $K$, and it consists of all other loops which are homotopic to $K$ within $W$. We are often interested in the case when: $$K=\R/\Z\times\set{0}\subset \R/\Z\times B(R)\subset Y,$$ and it is important in our applications to assume that its saturation $\kappa$ contains no other iterate of $K$. If this happens then $\kappa$ does not contain any constant loops, since the $0$th iterate of $K$ is constant.

\subsection{Non-resonance}
\label{sec:non-resonance}

Roughly speaking, non-resonance is a dynamical property needed to isolate the behaviour of the orbits of a contact vector field $X$ in the positive region.

\subsubsection{Definition of non-resonance}
\label{sec:definition}

Let us say that a pair $(X,\alpha)$ of a contact vector field and contact form is \emph{non-resonant relative a free homotopy class $\kappa$} provided that $h=\alpha(X)$ cuts out the dividing hypersurface tranversally and $R^{\alpha}$ is transverse to the dividing set $\Sigma$ along every closed orbit of $X$ contained in the dividing set in the free homotopy class of $\kappa$.

We show in \S\ref{sec:char-orbits-divid} that $(X,\alpha)$ being non-resonant relative $\kappa$ depends only on the pair $(\Sigma,\alpha)$.

The definition of non-resonance is inspired by \cite[Definition 5.1]{ekp}, although our notion is weaker than theirs as we do allow some closed characteristic orbits.

\subsubsection{Characteristic orbits on the dividing hypersurface}
\label{sec:char-orbits-divid}

If $X_{1}$ and $X_{2}$ have the same dividing hypersurface $\Sigma$ (which is assumed to be cut transversally) then $h_{1}=\alpha(X_{1})$ and $h_{2}=\alpha(X_{2})$ are proportional, i.e., $h_{1}=f h_{2}$ for some non-vanishing function $f$. Notice that:
\begin{equation*}
  \alpha(X_{1}-fX_{2})=0.
\end{equation*}
The contact condition implies that, at points in $\Sigma$, we have:
\begin{equation*}
  \d\alpha(X_{1},-)=\d h_{1}(R)\alpha-\d h_{1}=f(\d h_{2}(R)\alpha-\d h_{2})=f\d\alpha(X_{2},-).
\end{equation*}
It follows that $\d\alpha(X_{1}-fX_{2},-)=0$ vanishes along $\Sigma$, and hence $X_{1}-fX_{2}$ vanishes identically on $\Sigma$.

In particular, the (singular) line field spanned by $X$ on $\Sigma$ depends only on the hypersurface $\Sigma$. This line field is known as the \emph{characteristic foliation} of $\Sigma$, and it can be characterized as the kernel of $\d\alpha$ restricted to $\xi\cap T\Sigma$.

\subsubsection{Independence of the choice of contact form}
\label{sec:indep-choice-cont}

The choice of contact form is rather flexible in the definition of non-resonance, as the following lemma makes precise.
\begin{lemma}
  The set of contact forms $\alpha$ (with the chosen coorientation) for which $(X,\alpha)$ is non-resonant is contractible; indeed, if $(X,\alpha_{1})$ and $(X,\alpha_{2})$ are non-resonant, then the contact form $\alpha_{3}$ whose Reeb vector field is a convex combination: $$\theta R^{\alpha_{1}}+(1-\theta)R^{\alpha_{2}}$$ satisfies $(X,\alpha_{3})$ being non-resonant. The statement also holds if one restricts to a free homotopy class $\kappa$.
\end{lemma}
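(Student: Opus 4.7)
The plan is to establish the convex-combination assertion first; contractibility then follows by taking the straight-line retraction in the Reeb field. The main substantive step is a sign-agreement argument along closed characteristic orbits; the rest is formal.

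First I would check that $R_{3} := \theta R^{\alpha_{1}} + (1-\theta) R^{\alpha_{2}}$ is itself the Reeb of a unique compatible contact form $\alpha_{3}$. The space of contact vector fields is a vector space under the linear contact-Hamiltonian correspondence, and any contact vector field positively transverse to $\xi$ is the Reeb of a unique compatible contact form (the one normalizing its value on the vector field to $1$). Writing $\alpha_{2} = g\alpha_{1}$ with $g>0$, one has $\alpha_{1}(R_{3}) = \theta + (1-\theta)/g > 0$, so $\alpha_{3} = g_{3}\alpha_{1}$ for some $g_{3}>0$, and $h_{3} = g_{3}h_{1}$ continues to cut out $\Sigma$ transversally.

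The heart of the proof is to show that $dh_{1}(R^{\alpha_{1}})$ and $dh_{1}(R^{\alpha_{2}})$ have the same sign at every point of each closed orbit $\gamma$ of $X$ in $\Sigma$ in the class $\kappa$. Granting this, $dh_{1}(R_{3})$ is a positive linear combination of same-signed, nonvanishing quantities, hence nonvanishing along $\gamma$, so $(X,\alpha_{3})$ is non-resonant. To prove sign-agreement, decompose $R^{\alpha_{2}} = (1/g)R^{\alpha_{1}} + u$ with $u \in \xi$. Expanding $\iota_{R^{\alpha_{2}}} d\alpha_{2} = 0$ using $\alpha_{2} = g\alpha_{1}$ gives
\begin{equation*}
  \iota_{u} d\alpha_{1} = \frac{1}{g^{2}}\,dg - \frac{R^{\alpha_{2}}(g)}{g}\,\alpha_{1},
\end{equation*}
and combining this with the identity $dh_{1}(v) = d\alpha_{1}(v, X)$ for $v \in \xi$ (which follows from $\iota_{X} d\alpha_{1} = dh_{1}(R^{\alpha_{1}})\alpha_{1} - dh_{1}$ and $X \in \xi$ on $\Sigma$) yields, on $\Sigma$,
\begin{equation*}
  dh_{1}(R^{\alpha_{2}}) = \frac{1}{g}\,dh_{1}(R^{\alpha_{1}}) + \frac{1}{g^{2}}\,X(g).
\end{equation*}
Parameterizing $\gamma$ by $X$-time with period $T$, the correction term is a total derivative, $(1/g^{2})X(g) = -\frac{d}{dt}(1/g \circ \gamma)$, so it integrates to zero over the period. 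Thus $\oint_{\gamma} dh_{1}(R^{\alpha_{2}})\,dt = \oint_{\gamma} (1/g)\,dh_{1}(R^{\alpha_{1}})\,dt$. Since $dh_{1}(R^{\alpha_{1}})$ has constant sign on the connected orbit $\gamma$ (by non-resonance of $\alpha_{1}$) and $g > 0$, the right-hand integral inherits that sign; since $dh_{1}(R^{\alpha_{2}})$ itself has constant sign on $\gamma$ (by non-resonance of $\alpha_{2}$), this sign must agree with that of $dh_{1}(R^{\alpha_{1}})$.

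With the convex-combination statement in hand, contractibility follows by fixing any non-resonant $\alpha_{*}$ and using the retraction $F(\alpha, s)$ equal to the unique contact form whose Reeb is $(1-s)R^{\alpha} + s R^{\alpha_{*}}$. The refinement to a free homotopy class $\kappa$ imposes no new difficulty, as the argument proceeds orbit-by-orbit. I expect the sign-agreement step, i.e., the periodicity trick that annihilates the $X(g)$ correction, to be the main substantive obstacle; verifying that $R_{3}$ is a Reeb and that $h_{3}$ cuts $\Sigma$ transversally is routine, though the conformal-change computation for $\iota_{u} d\alpha_{1}$ demands some care.
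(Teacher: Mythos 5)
Your proposal is correct but proceeds by a genuinely different route from the paper's. The paper's argument is dynamical and coordinate-free: it exploits the conformal-factor identity $\varphi_{s}^{*}\alpha = e^{g_{s}}\alpha$ for the flow $\varphi_{s}$ of $X$, and observes that at a fixed point $x = \varphi_{s_{0}}(x)$ of the return map the quantity $g_{s_{0}}(x)$ — which is exactly $\int_{0}^{s_{0}} dh(R^{\alpha})\,dt$ along the orbit — is manifestly independent of the choice of $\alpha$ (within the given coorientation), because it compares two $1$-forms on the single tangent space $T_{x}Y$. This classifies each closed characteristic orbit as ``p-type'' or ``n-type'' intrinsically, and non-resonance forces $dh(R^{\alpha})$ to match that intrinsic sign. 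You instead carry out a direct tensorial computation: decompose $R^{\alpha_{2}} = (1/g)R^{\alpha_{1}} + u$ with $u \in \xi$ and $\alpha_{2}=g\alpha_{1}$, use the structure equation $\iota_{X}d\alpha_{1} = dh_{1}(R^{\alpha_{1}})\alpha_{1} - dh_{1}$, and derive the transformation law
\begin{equation*}
  dh_{1}(R^{\alpha_{2}}) = \frac{1}{g}\,dh_{1}(R^{\alpha_{1}}) + \frac{1}{g^{2}}\,X(g) \quad\text{on }\Sigma,
\end{equation*}
then notice that the correction term is the total $X$-derivative of $-1/g$ and hence integrates to zero over a closed orbit. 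Both proofs hinge on the same underlying period-integral invariance; the paper packages it as the return-map/conformal-factor argument (cleaner, and in fact proves the stronger statement that the sign is independent of $\alpha$ without any non-resonance hypothesis), while yours reaches it by an explicit computation that makes the exact correction term visible. Your calculation is correct — the only minor imprecision is the parenthetical claim that the identity $dh_{1}(v) = d\alpha_{1}(v,X)$ for $v\in\xi$ requires $X\in\xi$ on $\Sigma$; it holds everywhere directly from the contact vector field equation, and the restriction to $\Sigma$ is actually needed one step later, to drop the $\alpha_{1}(X)$-term when contracting $\iota_{u}d\alpha_{1}$ with $X$.
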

\begin{proof}
  The fact that $X$ is a contact vector field implies that:
  \begin{equation*}
    \varphi_{s}^{*}\alpha=e^{g_{s}}\alpha,
  \end{equation*}
  where $\varphi_{s}$ is the flow by $X$ and:
  \begin{equation*}
    \bd_{s}g_{s}\circ \varphi_{s}^{-1}=\d h(R^{\alpha}).
  \end{equation*}
  In particular, for every $x$ we have:
  \begin{equation*}
    g_{s}(x)=\int_{0}^{s}\d h(R^{\alpha})(\varphi_{\tau}(x))\d \tau.
  \end{equation*}
  If $R^{\alpha}$ is transverse to the dividing set along the trajectory $\varphi_{s}(x)$, then $\d h(R^{\alpha})$ is either positive or negative along the orbit, and hence $g_{s}(x)>0$ or $g_{s}(x)<0$. Let us call closed orbits with $g_{s}(x)>0$ for $s>0$ \emph{p-type} and orbits with $g_{s}(x)<0$ for $s>0$ \emph{n-type}. If $(X,\alpha)$ is non-resonant then every orbit in the dividing hypersurface is either p-type or n-type.

  Notice that if $\varphi_{s}(x)$ is a closed orbit with period $s_{0}>0$, then:
  \begin{equation}\label{eq:contact-comparison}
    (\varphi_{s_{0}}^{*}\alpha)_{x}=e^{g_{s_{0}}(x)}\alpha_{x}.
  \end{equation}
  If $\varphi_{s}(x)$ is a p-type orbit, then $(\varphi_{s_{0}}^{*}\alpha)_{x}>\alpha_{x}$, and the reverse inequality holds for n-type orbits. However, \eqref{eq:contact-comparison} is independent of the choice of contact form since $\varphi_{s_{0}}(x)=x$; here we restrict to contact forms defining the chosen coorientation.

  Therefore, if $(X,\alpha_{1})$ and $(X,\alpha_{2})$ are both non-resonant, then $\d h(R^{\alpha_{1}})$ and $\d h(R^{\alpha_{2}})$ must have the same sign along the orbit $\varphi_{s}(x)$ (depending on whether $\varphi_{s}(x)$ is p-type or n-type). In particular, $\d h(\theta R^{\alpha_{1}}+(1-\theta)R^{\alpha_{2}})$ is non-vanishing, for any $\theta\in [0,1]$. This completes the proof.
\end{proof}

If $\Sigma$ is the cooriented dividing hypersurface of $X$ and $(X,\alpha)$ is non-resonant for some $\alpha$ relative $\kappa$, then we will simply say $\Sigma$ is \emph{non-resonant} relative $\kappa$.

\subsubsection{Example}
\label{sec:example}

The unit sphere in $\R^{2n+1}$ is non-resonant for the standard contact form $\alpha_{0}$, since the only closed characteristic orbits are constant orbits at the north or south pole, and $R^{\alpha_{0}}$ is transverse to the sphere at the north and south pole.

\subsubsection{Convex hypersurfaces}
\label{sec:convex-hypersurfaces}

The example in \S\ref{sec:example} can be generalized to the class of convex hypersurfaces. Recall that a \emph{convex hypersurface} is one which is transverse to some contact vector field; see, e.g., \cite{eliashberg-gromov-AMS-1991,giroux-CMH-1991,honda-huang-arXiv-2019,eliashberg-pancholi-arXiv-2022,salamon-lecture-notes-convex-hypersurfaces}.
\begin{lemma}
Every convex hypersurface $\Sigma$ is non-resonant.
\end{lemma}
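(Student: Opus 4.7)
The plan is to exploit the transverse contact vector field $T$ provided by convexity to construct a contact form whose Reeb vector field coincides with $\pm T$ near every closed characteristic orbit on $\Sigma$; since $T$ is transverse to $\Sigma$, this forces the Reeb to be pointwise transverse to $\Sigma$ along these orbits, giving non-resonance.

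First I would rescale $\alpha$ by a positive function so that $L_T\alpha = 0$ in a tubular neighborhood of $\Sigma$, which is achievable by solving the first order ODE $T\cdot \log\rho = -\lambda$ (with $L_T\alpha = \lambda\alpha$) along the flow lines of $T$ emanating from $\Sigma$. In collar coordinates $(p,t)\in \Sigma\times (-\epsilon,\epsilon)$ with $T=\partial_t$, this brings $\alpha$ to the form $u(p)\,dt + \beta(p)$, with $u = \alpha(T)$ and $\beta$ a $t$-independent one-form on $\Sigma$. After reorienting if necessary, the transversality assumption ensures $dh(T) > 0$ along $\Sigma$, where $h = \alpha(X)$.

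Next I would study the behavior of $u$ along orbits of $X$ on $\Sigma$. Applying the identity $\iota_X d\alpha = f\alpha - dh$ (from $L_X\alpha = f\alpha$) to the vector $T$, and using that $X$ is tangent to $\Sigma$ together with the $T$-invariance of $\alpha$, I obtain $du(X) = fu - dh(T)$ on $\Sigma$. At points where $u = 0$ this forces $du(X) = -dh(T) < 0$, so the characteristic flow crosses $\{u = 0\}$ strictly from $\{u > 0\}$ into $\{u < 0\}$, and no closed orbit can meet $\{u = 0\}$. Furthermore, at a minimum of $u$ along a closed orbit in $\{u > 0\}$, the vanishing of $\dot u$ forces $u = dh(T)/f$ (so in particular $f > 0$ at that point); compactness of $\Sigma$ then bounds $dh(T)$ below by a positive constant and $f$ above, producing a uniform $\delta > 0$ such that every closed characteristic orbit lies in $\{u \ge \delta\}$ or $\{u \le -\delta\}$.

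Finally I would define $\tilde\alpha = \rho\alpha$ for a smooth positive function $\rho$ on $Y$ satisfying $\rho = 1/u$ on $\{u \ge \delta\}$ and $\rho = -1/u$ on $\{u \le -\delta\}$, smoothly interpolated across the band $\{|u| < \delta\}$ (the boundary values at $u = \pm\delta$ both equal $1/\delta$). The standard identity that $\alpha/\alpha(V)$ has $V$ as Reeb field whenever $V$ is a contact vector field with $\alpha(V) > 0$ gives $R^{\tilde\alpha} = T$ on $\{u \ge \delta\}$ and $R^{\tilde\alpha} = -T$ on $\{u \le -\delta\}$. Hence $R^{\tilde\alpha}$ is transverse to $\Sigma$ along every closed characteristic orbit, and $\tilde h = \rho h$ still cuts out $\Sigma$ transversally, so $(X,\tilde\alpha)$ is non-resonant. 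The main technical point is the uniform separation $|u| \ge \delta$ on closed orbits extracted from the ODE, without which the rescaling $\rho = 1/|u|$ could not be extended smoothly and positively across the Giroux dividing set $\{u = 0\}$ of $T$.
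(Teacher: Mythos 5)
Your proof is correct and follows essentially the same strategy as the paper's: both identify the Giroux dividing set $\Gamma=\{\alpha(T)=0\}\cap\Sigma$ of the transverse contact vector field, compute that the characteristic vector field $X$ crosses $\Gamma$ monotonically via the identity $d(\alpha(T))(X)=-dh(T)$ on $\Gamma$ (which you derive from $\iota_Xd\alpha=f\alpha-dh$ after normalizing $L_T\alpha=0$, and which the paper derives by inserting $X$ and $Z$ into the two contact-vector-field equations at $\Gamma$), conclude closed characteristic orbits avoid a neighborhood of $\Gamma$, and then produce a contact form whose Reeb field agrees with $\pm T$ there. Your version is a bit more explicit in two places — the quantitative lower bound $|u|\ge\delta$ from the minimum/compactness argument, and the concrete rescaling $\tilde\alpha=\rho\alpha$ in place of the paper's appeal to convex combinations of Reeb fields — but the underlying argument is the same.
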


\begin{proof}
  Let $Z$ be a contact vector field transverse to $\Sigma$. The proof of the lemma is based on two observations:
  \begin{enumerate}
  \item $\Sigma$ can be divided into two halves $\Sigma_{+}\cup \Sigma_{-}$ in such a way that $Z$ is positively transverse to $\xi$ on the interior of $\Sigma_{+}$ and negatively transverse to $\xi$ on the interior of $\Sigma_{-}$.
  \item There is a compact subset $K_{\pm}$ of the interior of $\Sigma_{\pm}$ so that every closed characteristic orbit lies in $K_{-}\cup K_{+}$. In other words, no closed characteristic orbit travels between the two halves.
  \end{enumerate}
  Because Reeb vector fields are closed under convex combinations, we can find a Reeb vector field $R^{\alpha}$ so that $R^{\alpha}=Z$ holds on $K_{+}$ and $R^{\alpha}=-Z$ holds on $K_{-}$. It then follows from (2) that $(X,\alpha)$ is non-resonant for any $X$ whose dividing surface is $\Sigma$, as desired.

  It remains to establish (1) and (2). First pick an auxiliary contact form $\alpha$. The decomposition in (1) follows by setting $k=\alpha(Z)$ and $\Sigma_{\pm}=\set{\pm k\ge 0}\cap \Sigma$. Denote: $$\Gamma=\set{k=0}\cap \Sigma,$$ so that $\Gamma$ is the common boundary of $\Sigma_{\pm}$.

  Now write $\Sigma$ as a transverse zero level set $\set{h=0}$, and let $X$ be the contact vector field so $\alpha(X)=h$.

  To establish (2), consider the contact vector field equations for $Z$ and $X$:
  \begin{equation*}
    \begin{aligned}
      &\d k+\d\alpha(Z,-)=\d k(R)\alpha,\\
      &\d h+\d\alpha(X,-)=\d h(R)\alpha.
    \end{aligned}
  \end{equation*}
  Insert $X$ into the first equation and $Z$ into the second equation and evaluate at points in $\Gamma$ to obtain:
  \begin{equation*}
    \d k(X)=-\d h(Z),
  \end{equation*}
  where we use that $\alpha(X)=\alpha(Z)=0$ holds along $\Gamma$. Since $Z$ is transverse to $\Sigma$, by assumption, $\d h(Z)$ is non-vanishing along $\Sigma$, and hence $\d k(X)$ is non-vanishing along $\Gamma$. Without loss, suppose that $\d k(X)>0$ holds along $\Gamma$; this can be achieved by reversing $Z$ on any components of $\Sigma$ where the sign is opposite. Then every orbit of $X$ passes from $\Sigma_{-}$ into $\Sigma_{+}$, and never travels back. Moreover, no closed orbit of $X$ enters a neighborhood of $\Gamma$; the complement of this neighborhood produces the desired compact sets $K_{\pm}$.

  As explained above, we modify $R^{\alpha}$ so that $R^{\alpha}$ agrees with $\pm Z$ in a neighborhood of $K_{\pm}$. This completes the proof.
\end{proof}

\subsubsection{Prequantizations of ellipsoids and non-resonance}
\label{sec:preq-ellips}

Introduce the standard symplectic ellipsoid in $\C^{n}$ associated to the vector $(a_{1},\dots,a_{n})$:
\begin{equation*}
  E(a)=\textstyle\set{\sum \pi a_{i}^{-1}\abs{z_{i}}^{2}\le 1};
\end{equation*}
we assume throughout that $0<a_{1}\le \dots\le a_{n}<\infty$. Let us define the \emph{prequantization} of this ellipsoid to be:
\begin{equation*}
  \Omega(a)=\R/\Z\times E(a);
\end{equation*}
considered as a compact domain in $\R/\Z\times \C^{n}$ with the contact form \eqref{eq:prequantization-form}. A straightforward computation shows that:
\begin{equation*}
  X_{a}=\pd{}{\theta}-\sum a_{i}^{-1}V_{i}
\end{equation*}
is a contact vector field when $V_{i}$ is the horizontal lift of the Hamiltonian vector field for $\pi \abs{z_{i}}^{2}$; the nice thing about the $V_{1},\dots,V_{n}$ vector fields is that they pairwise commute and each one defines an $\R/\Z$-action. Moreover:
\begin{equation*}
  h_{a}=\alpha(X_{a})=1-\sum \pi a_{i}^{-1}\abs{z_{i}}^{2},
\end{equation*}
satisfies that $\set{h_{a}\ge 0}=\Omega(a)$; i.e., $X_{a}$ is adapted to $\Omega(a)$.

Let $\kappa$ be the free homotopy class containing $\R/\Z\times\set{0}\subset \R/\Z\times \C^{n}$. Then:
\begin{lemma}
  If $a_{1}>1$, then $(X_{a},\alpha)$ is non-resonant relative the free homotopy class $\kappa$.
\end{lemma}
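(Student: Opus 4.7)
The plan is to check the two clauses defining non-resonance relative $\kappa$: (a) that $h_a$ is regular on $\Sigma = \{h_a = 0\}$, and (b) that $R^\alpha$ is transverse to $\Sigma$ along every closed orbit of $X_a$ contained in $\Sigma$ representing $\kappa$. For (a), $dh_a$ is a scalar multiple of the radial $1$-form $\sum a_i^{-1}(x_i\,dx_i + y_i\,dy_i)$ on $\C^n$ and hence vanishes only at $z = 0$, where $h_a = 1 \ne 0$; so $h_a$ cuts $\Sigma$ transversally. Since $d\alpha = d\lambda$ is pulled back from $\C^n$ and is non-degenerate there, the Reeb field is $R^\alpha = \partial/\partial\theta$, and this vector field is \emph{tangent} to $\Sigma$ everywhere (as $dh_a$ has no $d\theta$ component). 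Thus (b) can hold only if no closed orbits of $X_a$ in $\Sigma$ represent $\kappa$, and the strategy is to prove exactly this.

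The next step is to write the flow of $X_a$ explicitly. Because $\partial/\partial\theta$ and the $V_i$ pairwise commute and each $V_i$ generates an $\R/\Z$-action on its $\C$-factor, the time-$t$ flow sends $(\theta, z_1,\dots,z_n)$ to $(\theta+t,\, e^{\pm 2\pi i t/a_1}z_1,\dots, e^{\pm 2\pi i t/a_n}z_n)$. In particular $|z_i|$ is preserved along trajectories, and a trajectory through $(\theta,z)$ closes in time $T$ precisely when $T \in \Z$ and $T/a_i \in \Z$ for every $i$ with $z_i \ne 0$.

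Now apply the homotopy-class constraint. Since $\C^n$ is simply connected, free homotopy classes of loops in $\R/\Z\times\C^n$ are indexed by their $\R/\Z$-winding number, and $\kappa$ is the class of winding number $1$; a closed orbit of period $T$ has winding number $T$, so any such orbit in $\kappa$ satisfies $T=1$. Then $1/a_i \in \Z$ fails for every $i$ (since $a_i \ge a_1 > 1$), forcing $z_i = 0$ for all $i$. But at $z = 0$ we have $h_a = 1 \ne 0$, contradicting the orbit lying in $\Sigma$. Hence there are no closed orbits of $X_a$ in $\Sigma$ in the class $\kappa$, and (b) is vacuous.

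There is no real obstacle: the argument is a direct unwinding once the flow is written out. The two points to pin down carefully are (i) the identification of $\kappa$ with the winding-$1$ class, which uses only simple connectivity of $\C^n$, and (ii) the fact that each $V_i$ generates an \emph{exact} period-$1$ circle action, which is what makes the divisibility condition $T/a_i \in \Z$ sharp and lets the hypothesis $a_1 > 1$ be used.
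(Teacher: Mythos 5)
Your proof is correct and follows essentially the same route as the paper: write out the flow of $X_a$ explicitly using the commuting circle actions, observe that the $\theta$-winding number forces any closed orbit in class $\kappa$ to have period $1$, and then use $a_i > 1$ to force $z = 0$, contradicting $h_a = 1 \ne 0$ there. Your treatment is somewhat more explicit than the paper's (you verify the regularity of $h_a$ and the tangency of $R^\alpha = \partial/\partial\theta$ to $\Sigma$, which the paper leaves implicit), but the key argument is the same.
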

\begin{proof}
  Indeed, we will prove that $X_{a}$ has \emph{no} orbits in its dividing hypersurface which lie in the free homotopy class of $\kappa$. This is immediate; since the cross-sectional area $a_{i}$ is bigger than one, the vector field $a_{i}^{-1}Y_{i}$ rotates the $z_{i}$ coordinate by a total angle less than $2\pi$ (in time $1$).

  In particular, $X_{a}$ has no closed orbits of period less than $1$, except the central knot $z_{1}=\dots=z_{n}=0$ which never lies in the dividing hypersurface. Any orbit of period more than $1$ will clearly not lie in the free homotopy class $\kappa$; thus the proof is complete.
\end{proof}

Note that, if $\R/\Z\times B(R)\subset \bd W$, and if the saturation $\kappa$ of $K=\R/\Z\times \set{0}$ contains none of the other iterates of $K$, then $(X_{a},\alpha)$ is also non-resonant relative $\kappa$. This is because the dividing hypersurface of $X_{a}$ remains entirely in $\R/\Z\times B(R)$, and so the only orbits which can appear are homotopic to iterates of $K$.

\section{The selective Floer cohomology of a contact vector field}
\label{sec:defin-select-floer}

In this section we explain how to construct the selective Floer cohomology $Q(X;s;\kappa)$ associated to a contact vector field $X$ using the framework of \cite{ulja-zhang,djordjevic_uljarevic_zhang,cant-sh-barcode}. These selective Floer cohomology groups naturally form a persistence module whose colimit is a variant of the selective symplectic homology introduced in \cite{uljarevic-ssh}. The construction of these groups is carried out in \S\ref{sec:select-sympl-cohom}.

\subsection{Construction of the invariant}
\label{sec:constr-invar}

As explained in the introduction, we will construct cohomology groups $Q(X;s;\kappa)$, for $s\in [0,\infty)$ and a saturated free homotopy class $\kappa$ (see \S\ref{sec:free-homot-class}).

\subsubsection{Choice of cut-off function}
\label{sec:choice-cut-off}

Let $\mu:\R\to \R$ be a smooth convex function so that $\mu(x)=x$ for $x\ge 1$ and $\mu(x)=1/2$ for $x\le 0$, and let $\mu_{\delta}(x)=\delta\mu(x/\delta)$; see Figure \ref{fig:mu}. We think of $\delta\in (0,1)$ as a small parameter. Note that:
\begin{equation*}
  \ud{}{x}(\mu(x)-\mu_{\delta}(x))=\mu'(x)-\mu'(x/\delta)\le 0,
\end{equation*}
so $\mu(x)\ge \mu_{\delta}(x)$ holds for all $x\in \R$ (note the equality holds for $x\ge 1$). Consequently, $\mu_{\delta_{0}}(x)\ge \mu_{\delta_{1}}(x)$ holds for all $\delta_{0}\ge \delta_{1}$.

\subsubsection{Selecting the positive part of an isotopy}

Consider the contact vector field $X^{\alpha}_{\delta}$ generated by $\mu_{\delta}(h)$ and the contact form $\alpha$. A straightforward computation shows that:
\begin{equation}\label{eq:formula-contact-vf}
  X^{\alpha}_{\delta}=(\mu_{\delta}(h)-h\mu_{\delta}'(h))R^{\alpha}+\mu_{\delta}'(h)X,
\end{equation}
where $R^{\alpha}$ is the Reeb flow for $\alpha$. By construction $X^{\alpha}_{\delta}=X$ holds in the region where $\set{h\ge \delta}$.

\subsubsection{A genericity statement}
\label{sec:genericity}

In order to define the Floer cohomology of a contact isotopy $\psi_{t}$ using the framework of \cite{djordjevic_uljarevic_zhang,cant-sh-barcode,cant-hedicke-kilgore}, it is important that $\psi_{1}$ has no discriminant points. If $\psi_{t}$ is the autonomous flow by a positive contact vector field, this condition is that the contact vector field has no $1$-periodic orbits. The following claim will be sufficient for our constructions.

\begin{claim}
  For a generic set of times $s$, the positive contact vector field $X^{\alpha}_{\delta}$ has no $s$-periodic orbits.
\end{claim}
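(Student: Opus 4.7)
The plan is to reduce the statement to a standard fact about action spectra of Reeb flows. Since $\mu_{\delta}$ is convex with $\mu_{\delta}|_{(-\infty,0]}=\delta/2$, it is non-decreasing and strictly positive, so $\alpha(X^{\alpha}_{\delta})=\mu_{\delta}(h)>0$ and $X^{\alpha}_{\delta}$ equals the Reeb vector field $R^{\beta}$ of the rescaled contact form $\beta:=\alpha/\mu_{\delta}(h)$. An $s$-periodic orbit of $X^{\alpha}_{\delta}$ is then precisely a closed $R^{\beta}$-orbit of period $s$, so the set of bad times is the action spectrum $\mathrm{Spec}(\beta)\subset (0,\infty)$. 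The aim is to show this set is closed with empty interior (indeed of Lebesgue measure zero), whence its complement is open, dense, and generic.

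Closedness of $\mathrm{Spec}(\beta)$ follows from Arzelà--Ascoli: a sequence of closed orbits with periods $s_{n}\to s$, reparametrized to the fixed domain $[0,1]$, has uniformly bounded $C^{1}$-norm $s_{n}\sup_{Y}|R^{\beta}|$ on the compact manifold $Y$, so a subsequence converges in $C^{0}$ (and hence in $C^{\infty}$ by ODE regularity) to a closed $R^{\beta}$-orbit of period $s$. Emptiness of interior is the substantive part, and I would derive it from the following first-variation identity. If $\gamma_{t}(\tau)$, $\tau\in [0,1]$, is a smooth family of closed Reeb orbits parametrized so that $\partial_{\tau}\gamma_{t}=s_{t}R^{\beta}(\gamma_{t})$, then setting $v_{t}=\partial_{t}\gamma_{t}$ and using Cartan's formula together with the $\tau$-periodicity of $v_{t}$ and $\d\beta(\cdot,R^{\beta})=0$, one computes
\begin{equation*}
  \ud{s_{t}}{t}=\int_{0}^{1}\partial_{\tau}\bigl(\beta(v_{t})\bigr)\,\d\tau+s_{t}\int_{0}^{1}\d\beta(v_{t},R^{\beta})\,\d\tau=0.
\end{equation*}
Thus the period is locally constant along smooth deformations of closed Reeb orbits.

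The main obstacle is converting ``locally constant on smooth families'' into ``measure-zero image in $\R$'', since the space of closed Reeb orbits is not \emph{a priori} a manifold. I would handle this by working on a transverse slice $\Sigma$ through each reference closed orbit $\gamma_{0}$: the first-return map $\Phi\colon \Sigma\to\Sigma$ and first-return-time function $p\colon \Sigma\to\R$ are smooth on a neighbourhood of the basepoint, and closed orbits near $\gamma_{0}$ correspond to elements of $\mathrm{Fix}(\Phi)$. The first-variation identity above implies $\d p$ annihilates every vector tangent to $\mathrm{Fix}(\Phi)$, and a routine Sard-type argument then yields that $p(\mathrm{Fix}(\Phi))$ has measure zero. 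Arzelà--Ascoli compactness of orbits of period at most $T$ lets one cover the relevant portion of orbit space by finitely many such slices, so $\mathrm{Spec}(\beta)\cap [0,T]$ has measure zero; taking the union over positive integers $T$ completes the proof.
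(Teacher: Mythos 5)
Your reduction is the same as the paper's: rewrite $X^\alpha_\delta$ as the Reeb vector field of $\beta=\alpha/\mu_\delta(h)$ (correct, since $\mu_\delta(h)>0$), so the bad times are exactly the Reeb action spectrum of $\beta$, and then invoke the standard fact that this spectrum is nowhere dense. The paper simply cites that fact (attributing it to Sard's theorem applied to the contact-action functional, with references to Hofer--Zehnder, Schwarz, Ginzburg), while you attempt to prove it. The first-variation identity $\d s_t/\d t=0$ along smooth families of closed Reeb orbits is correct, and working on a Poincar\'e section $\Sigma$ with return map $\Phi$ and return time $p$ is a reasonable finite-dimensional reduction (one also needs to track higher iterates $\Phi^k$, since closed orbits geometrically near $\gamma_0$ but of longer period are periodic, not fixed, points of $\Phi$; that is a minor omission).

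However, the step you label a ``routine Sard-type argument'' is where the real content lives, and as stated it is not justified. The set $F=\mathrm{Fix}(\Phi)$ is merely a closed subset of $\Sigma$, not a submanifold, so it need not have tangent vectors at its points, and Sard's theorem does not apply to the restriction of a smooth function to an arbitrary closed set. Worse, the implication you seem to be relying on --- ``if $p$ is locally constant along every smooth curve in $F$, then $p(F)$ has measure zero'' --- is simply false for closed sets: take $\Sigma=\R$, $p=\mathrm{id}$, and $F$ a fat Cantor set; the hypothesis is vacuous (there are no non-constant smooth curves in $F$) while $p(F)$ has positive measure. One must therefore use more than the first variation: either the symplectic nature of $\Phi$ and the relation $\Phi^{*}(\beta|_\Sigma)-\beta|_\Sigma=\d p$ (to show $p$ is constant \emph{to high order} on $F$, feeding into a genuine Morse--Sard estimate), or a finite-dimensional (``broken geodesic'') approximation of the loop-space action functional, as in the references the paper cites. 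Without one of these, the argument does not close.
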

\begin{proof}
  It is clear that $X^{\alpha}_{\delta}$ is simply a Reeb flow for some contact form (indeed, every positive contact vector field is). Thus the result follows from the well-known fact that the spectrum of periods of a Reeb vector field is nowhere dense. This is due to Sard's theorem, since the periods of $R^{\alpha}$ are the critical values of the contact-action $\gamma\mapsto \int \gamma^{*}\alpha$; see, e.g., \cite{hofer-zehnder-90,schwarz-pjm-2000,ginzburg-2005-weinstein}.
\end{proof}

\subsubsection{Floer cohomology of certain contact isotopies}
\label{sec:floer-cohom-cont}

Recall from \cite{merry-uljarevic,djordjevic_uljarevic_zhang,cant-sh-barcode,cant-hedicke-kilgore} that to any contact isotopy $\set{\psi_{t}:t\in [0,1]}$, so that $\psi_{0}=\id$ and $\psi_{1}$ has no discriminant points, there is a Floer cohomology group $HF(\psi_{t})$ defined as the Floer cohomology of any contact-at-infinity Hamiltonian isotopy of $W$ whose ideal restriction is $\psi_{t}$. When $\kappa$ is a saturated class, one defines $HF(\psi_{t};\kappa)$ as the piece of Floer cohomology generated by orbits in the class of $i(\kappa)$ where $i:Y\to W$ is the inclusion of the ideal boundary.

In this paper we are primarily interested in the autonomous isotopies $\psi_{s}$ generated by the contact vector fields $X_{\delta}^{\alpha}$. In this case we write:
\begin{equation*}
  HF(X^{\alpha}_{\delta};s;\kappa)
\end{equation*}
to be the Floer cohomology of the contact isotopy $\psi_{st}$ in the class $\kappa$.

As explained in \S\ref{sec:genericity}, this Floer cohomology is well-defined for $s$ in the complement of a nowhere dense set. We can therefore extend the Floer cohomology to all $s\in \R$ by a limit:
\begin{equation}\label{eq:limit-completion}
  HF(X^{\alpha}_{\delta};s;\kappa)=\lim_{\epsilon\to 0+}HF(X^{\alpha}_{\delta};e^{\epsilon}s;\kappa),
\end{equation}
where the morphisms in the limit are given as continuation maps. We defer the definition of continuation maps until \S\ref{sec:continuation-maps}, but let us just say a few things for now:
\begin{enumerate}
\item Continuation maps are defined $HF(X_{0};s;\kappa)\to HF(X_{1};s;\kappa)$ using continuation data $X_{\sigma}$, namely, a one-parameter family of contact vector fields $X_{\sigma}$ interpolating from $X_{0}$ to $X_{1}$ which is \emph{non-decreasing}.
\item The non-decreasing condition is a convex condition to place on $X_{\sigma}$; consequently, the morphism is canonical and depends only on the existence of a non-decreasing interpolation. If $h_{0},h_{1}$ are contact Hamiltonians for $X_{0},X_{1}$, then a necessary and sufficient condition for there to exist a non-decreasing interpolation is $h_{0}\le h_{1}$.
\item $HF(e^{\epsilon}X;s;\kappa)=HF(X;e^{\epsilon}s;\kappa)$, so the continuation maps in (1) can be used to define the morphisms in the limit \eqref{eq:limit-completion}.
\end{enumerate}

\subsubsection{Definition of the selective Floer cohomology}

Define:
\begin{equation*}
  Q(X; s,\kappa):=\lim_{\alpha}\lim_{\delta\to 0+}HF(X^{\alpha}_{\delta};s;\kappa).
\end{equation*}
The inverse limits are computed with respect to continuation maps (see \S\ref{sec:continuation-maps}). Indeed, we recall that:
\begin{equation*}
  \mu_{\delta_{1}}(h)\le \mu_{\delta_{0}}(h)
\end{equation*}
if $\delta_{1}\le \delta_{0}$, and hence there is a continuation data from $X^{\alpha}_{\delta_{1}}$ to $X^{\alpha}_{\delta_{0}}$. The limit over contact forms is formal and is explained next.

\subsubsection{Dependence on the contact form}
\label{sec:depend-cont-form}

Suppose that $\lambda=e^{r}\alpha$ is a different contact form, and let $k=\lambda(X)=e^{r}h$. For $e^{r}\le \epsilon/\delta$ we have:
\begin{equation*}
  e^{r}\delta\mu(h/\delta)\le \epsilon \mu(e^{r}h/\epsilon),
\end{equation*}
so the linear interpolation from $X^{\alpha}_{\delta}$ to $X^{\lambda}_{\epsilon}$ is a non-negative path, and hence has a well-defined continuation map. It is with respect to these continuation maps and abstract nonsense that we obtain a canonical isomorphism:
\begin{equation*}
  \lim_{\delta\to 0}HF(X^{\alpha}_{\delta};s;\kappa)\to \lim_{\epsilon\to 0}HF(X^{\lambda}_{\epsilon};s;\kappa),
\end{equation*}
and it is these isomorphisms that are used in the limit over contact forms. The upshot of this discussion is that the vector space $Q(X;s;\kappa)$ is independent of the choice of $\alpha$, but it can be computed using any fixed contact form.

\subsubsection{The role of non-resonance}
\label{sec:role-non-resonance}

The following geometric lemma is where we use the non-resonance assumption.

\begin{lemma}\label{lemma:mu-approach-barcode}
  If $X_{\sigma}$ is a family of non-resonant contact vector fields relative a free homotopy class $\kappa$ then, for any $s_{0}>0$, there exists $\delta_{0}>0$ so that the following holds: every orbit of $X_{\sigma,\delta}^{\alpha}$ in the class of $\kappa$ with $\delta\le \delta_{0}$ and period at most $s_{0}$ is also an orbit of $X_{\sigma}$ and lies entirely in the region where $h_{\sigma}\ge \delta$.
\end{lemma}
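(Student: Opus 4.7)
The plan is to argue by contradiction. Assuming the conclusion fails, I extract a sequence $(\sigma_n,\delta_n,s_n,\gamma_n)$ with $\delta_n\to 0$, $s_n\in(0,s_0]$, and $\gamma_n$ an $s_n$-periodic orbit of $X^\alpha_{\sigma_n,\delta_n}$ in the class $\kappa$ which is \emph{not} contained in $\{h_{\sigma_n}\ge\delta_n\}$. The formula \eqref{eq:formula-contact-vf} yields a uniform bound on $\|X^\alpha_{\sigma,\delta}\|_\infty$, so the $\gamma_n$ are equicontinuous; by Arzel\`a--Ascoli I pass to a subsequence with $\sigma_n\to\sigma_\infty$, $s_n\to s_\infty\in[0,s_0]$, and $\gamma_n\to\gamma_\infty$ uniformly.

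The first step is to identify $\gamma_\infty$ as a closed orbit of $X_{\sigma_\infty}$ on the dividing hypersurface $\Sigma_{\sigma_\infty}$ in the class $\kappa$. On $\{h\le 0\}$ the vector field $X^\alpha_{\sigma,\delta}$ equals $(\delta/2)R^\alpha$ and has norm $O(\delta_n)$, so $h_{\sigma_n}\circ\gamma_n\ge -O(\delta_n)$ and $\gamma_\infty$ takes values in $\{h_{\sigma_\infty}\ge 0\}$; moreover $\gamma_\infty$ must meet $\Sigma_{\sigma_\infty}$, for otherwise $\gamma_n\subset\{h_{\sigma_n}\ge\delta_n\}$ for $n$ large, contradicting the choice of $\gamma_n$. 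Reparametrizing by $\tau=\int_0^s\mu'_{\delta_n}(h(\gamma_n))\,ds'$, which collapses the slow Reeb-type excursions into $\{h\le 0\}$, produces a curve $\beta_n(\tau)=\gamma_n(t_n(\tau))$ of period $T_n\le s_n$ which approximates an $X_{\sigma_n}$-orbit on $\{h>0\}$. Because $X_\sigma$ is tangent to $\Sigma_\sigma$ (insert $X$ into the contact equation to get $dh(X)=h\cdot dh(R^\alpha)$, which vanishes on $\Sigma$), the reparametrized limit $\beta_\infty$ must lie entirely on $\Sigma_{\sigma_\infty}$ as a closed $X_{\sigma_\infty}$-orbit of positive period in the class $\kappa$.

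The contradiction comes from non-resonance. Writing $\psi_s^*\alpha=e^{G_s}\alpha$ for the flow $\psi_s$ of $X^\alpha_{\sigma_n,\delta_n}$, the formula $\bd_s G_s\circ\psi_s^{-1}=d[\mu_{\delta_n}(h)](R^\alpha)$ gives
\[
  G_{s_n}(\gamma_n(0))=\int_0^{s_n}\mu'_{\delta_n}(h(\gamma_n(s)))\,dh(R^\alpha)(\gamma_n(s))\,ds,
\]
and since $\alpha(X^\alpha_{\sigma_n,\delta_n})=\mu_{\delta_n}(h)\ge\delta_n/2>0$, closedness of $\gamma_n$ forces the left side to vanish. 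Under the change of variables $\tau=\tau_n(s)$, this integral becomes $\int_0^{T_n}dh(R^\alpha)(\beta_n(\tau))\,d\tau$, which passes in the limit to a positive multiple of $g_{s_\infty}(\gamma_\infty(0))=\int_0^{s_\infty}dh(R^\alpha)(\gamma_\infty(s))\,ds$. By the p-type/n-type dichotomy in the contact-form-independence lemma of \S\ref{sec:indep-choice-cont}, non-resonance of $X_{\sigma_\infty}$ relative to $\kappa$ forces this integral to be nonzero, yielding the desired contradiction.

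The hardest part will be controlling the $\mu'_{\delta_n}(h)$ factor in the limiting integrand. Since $\gamma_\infty$ lies on $\{h=0\}$ while $\mu'_{\delta_n}(h)$ concentrates on $\{h\gtrsim\delta_n\}$, the relevant limit depends sensitively on the ratio $h(\gamma_n)/\delta_n$; casework on whether this ratio tends to zero, stays bounded, or diverges is needed to confirm that the reparametrized limit is a non-degenerate closed $X_{\sigma_\infty}$-orbit on $\Sigma_{\sigma_\infty}$ in the class $\kappa$. The degenerate regime where $h(\gamma_n)/\delta_n\to 0$ is excluded because there $X^\alpha_{\sigma_n,\delta_n}\approx(\delta_n/2)R^\alpha$, so a closed orbit of period at most $s_0$ would correspond to a closed Reeb orbit of period $s_0\delta_n/2$, below the minimal Reeb period for $n$ large; flow-box arguments for the nowhere-vanishing vector field $X^\alpha_{\sigma_n,\delta_n}$ similarly rule out the sub-cases $s_\infty=0$ and $\gamma_\infty$ constant.
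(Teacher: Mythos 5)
Your strategy is structurally the same as the paper's: argue by contradiction, extract a sequence of extra orbits with $\delta_n\to 0$, use compactness plus a reparametrization that suppresses the Reeb term to produce a limiting closed characteristic orbit on $\Sigma_{\sigma_\infty}$ in class $\kappa$, and then use non-resonance to derive a contradiction. But the final contradiction step is genuinely different, and yours needs justification that you do not supply.

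You assert that ``closedness of $\gamma_n$ forces'' the conformal factor $G_{s_n}(\gamma_n(0))$ to vanish, citing $\alpha(X^{\alpha}_{\sigma_n,\delta_n})\ge\delta_n/2>0$. The assertion is \emph{correct}, but not for the reason stated: positivity of a contact Hamiltonian at a fixed point does not by itself make the conformal factor vanish. What is true is that, being positive, $X^{\alpha}_{\sigma_n,\delta_n}$ is the Reeb vector field of the rescaled form $\beta=\alpha/\mu_{\delta_n}(h_{\sigma_n})$; its flow preserves $\beta$, and at a fixed point $h_{\sigma_n}$ is also preserved, hence $\alpha=\mu_{\delta_n}(h_{\sigma_n})\beta$ is preserved there, i.e.\ $G_{s_n}(\gamma_n(0))=0$. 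This should be spelled out. The paper avoids needing this fact entirely by working with $\partial_{s}h(\gamma_n(s))=\mu_{\delta_n}(h)\,\d h(R^{\alpha})$: the closedness constraint $\int_0^{s_n}\mu_{\delta_n}(h)\,\d h(R^{\alpha})\,\d s=0$ is immediate, and since $\mu_{\delta_n}(h)\ge\delta_n/2$ is bounded away from zero, a definite sign of $\d h(R^{\alpha})$ along $\gamma_n$ (furnished by the Gronwall comparison and non-resonance) gives strict monotonicity of $h\circ\gamma_n$, a contradiction with no further work. Your integrand $\mu_{\delta_n}'(h)\,\d h(R^{\alpha})$ has a coefficient that vanishes on $\set{h\le 0}$, which is exactly why you are forced into the delicate control of $T_n$ and the ratio $h(\gamma_n)/\delta_n$ that you flag as the hardest part.

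A secondary remark on the reparametrization itself: $\tau=\int_0^s\mu'_{\delta_n}(h(\gamma_n))\,\d s'$ is degenerate on $\set{h\le 0}$ and, where invertible, gives $\dot\beta_n(\tau)=\big(\mu_{\delta_n}(h)/\mu'_{\delta_n}(h)\big)R^{\alpha}+V$, whose Reeb coefficient is unbounded as $\mu'_{\delta_n}(h)\to 0$. The paper instead keeps the parameter domain $[0,1]$ fixed ($\eta_n(t)=\gamma_n(s_n t)$) and compares $\eta_n$ to the flow $g_n$ of $b_n(t)V$ via a Gronwall estimate, noting that the neglected coefficient $a_n(t)=s_n\mu_{\delta_n}(h(\gamma_n(s_nt)))$ is uniformly $O(\delta_n)$ thanks to the excursion estimates $\mu_{\delta_n}(h)\in[\delta_n/2, e^{Cs_0}\delta_n]$; all coefficients stay bounded and the limiting object is clean. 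Your version can likely be completed, but it trades a bounded-coefficient Gronwall comparison for a singular change of variables and an extra argument, so the paper's route is preferable.
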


\paragraph{}
\label{sec:criterion-for-isomorphism}

Before we give the proof, we explain how the lemma gives a criterion for establishing certain continuation maps are isomorphisms.

If $X_{\sigma}$ is a family of contact vector fields which satisfies:
\begin{equation*}
  X_{\sigma}\ge 0\implies \bd X_{\sigma}\ge 0
\end{equation*}
then it is easy to see $X_{\sigma,\delta}^{\alpha}$ is non-decreasing; thus there is an induced continuation map:
\begin{equation}\label{eq:partial-CM}
  HF(X^{\alpha}_{0,\delta};s;\kappa)\to HF(X^{\alpha}_{1,\delta};s;\kappa)
\end{equation}
which commutes with the other continuation morphisms, and consequently induces a morphism:
\begin{equation}\label{eq:general-CM}
  Q(X_{0};s;\kappa)\to Q(X_{1};s;\kappa)
\end{equation}
If $X_{\sigma}$ has no positive orbits of period $s$ in the class $\kappa$, and is non-resonant for each $\sigma$, then Lemma \ref{lemma:mu-approach-barcode} implies \eqref{eq:general-CM} is an isomorphism. This follows from the fact that there exists a family of contact vector fields (namely $X_{\sigma,\delta}^{\alpha}$) interpolating between $X_{0,\delta}^{\alpha}$ and $X_{1,\delta}^{\alpha}$ which is non-decreasing and which never has any orbits of period $s$ in the class $\kappa$. That this implies the continuation map \eqref{eq:partial-CM} is an isomorphism is proved in \cite{ulja-zhang}, and is an analog of well-known facts about continuation maps in other contexts; see \S\ref{sec:defin-cont-map-Q} for further discussion.

\paragraph{}

An important variant of the idea in \S\ref{sec:criterion-for-isomorphism} is the following: if $X$ is non-resonant and has no positive orbits of period in $[s_{0},s_{1}]$ in the class $\kappa$, then neither does $X_{\delta}^{\alpha}$ for $\delta$ sufficiently small. In particular, the continuation map:
\begin{equation}\label{eq:structure-map}
  Q(X;s_{0};\kappa)\to Q(X;s_{1};\kappa)
\end{equation}
is an isomorphism, completing the proof of Theorem \ref{theorem:pmod-barcode}. Understanding when this continuation map is an isomorphism is important for us as it plays the role of the structure map in the persistence module $Q(X;-;\kappa)$.

\begin{cor}
  If $X$ is non-resonant relative the class $\kappa$, then the endpoints of bars in the barcode for $Q(X;-;\kappa)$ form a subset of the periods of positive orbits of $X$ in the class of $\kappa$.
\end{cor}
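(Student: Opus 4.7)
The plan is to argue by contrapositive from Theorem~\ref{theorem:pmod-barcode}. The standard characterization of the barcode of a persistence module says that $s\in [0,\infty)$ is an endpoint of a bar in the barcode of $Q(X;-;\kappa)$ if and only if for every $\epsilon>0$ the structure map
\begin{equation*}
  Q(X;s-\epsilon;\kappa)\to Q(X;s+\epsilon;\kappa)
\end{equation*}
fails to be an isomorphism. It therefore suffices to prove: if $s$ is not the period of any positive orbit of $X$ in the class $\kappa$, then there exists $\epsilon>0$ such that $X$ has no positive orbits with period in $[s-\epsilon,s+\epsilon]$ in the class $\kappa$. The isomorphism then follows immediately from Theorem~\ref{theorem:pmod-barcode}.

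This last claim is equivalent to the assertion that the set $S\subset [0,\infty)$ of periods of positive orbits of $X$ in the class $\kappa$ is closed. I would establish closedness by the usual Arzel\`a--Ascoli compactness: given a sequence $\gamma_{n}$ of positive orbits with periods $s_{n}\to s$, the orbits are confined to the positive region $\set{h>0}$, and in the settings of this paper (where $X$ typically has dynamics supported in a compact subset of $Y$) they are contained in a common compact set, with uniformly bounded $C^{1}$-norm. After passing to a subsequence, $\gamma_{n}\to \gamma$ in $C^{1}$, and the limit $\gamma$ is an $s$-periodic orbit of $X$ lying in the class $\kappa$ (the free homotopy class is preserved under $C^{0}$-convergence).

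The remaining delicate point is that $\gamma$ must be a genuine positive orbit, i.e., $h>0$ holds strictly along $\gamma$; a priori $\gamma$ could degenerate onto the dividing hypersurface $\Sigma=\set{h=0}$. Ruling this out is where the non-resonance hypothesis is crucial. The rigidity of the linearized dynamics along a closed orbit on $\Sigma$ in the class $\kappa$---namely, the transversality of $R^{\alpha}$ to $\Sigma$, which translates by the computation of \S\ref{sec:indep-choice-cont} into strict expansion or contraction of $\alpha$ by the return map---should prevent nearby positive orbits of $X$ from matching both the location and the period of a limit in $\Sigma$. This dynamical rigidity assertion is the principal obstacle in the proof and deserves a careful separate lemma; once it is in hand, closedness of $S$ follows from the Arzel\`a--Ascoli step, and the corollary reduces to a formal invocation of Theorem~\ref{theorem:pmod-barcode}.
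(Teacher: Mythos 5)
Your overall strategy is exactly right, and indeed it is the only reasonable one: the paper presents this corollary without proof as an immediate consequence of Theorem~\ref{theorem:pmod-barcode}, but as you observe, the deduction is not purely formal --- Theorem~\ref{theorem:pmod-barcode} is a statement about closed intervals $[s_{0},s_{1}]$ free of periods, whereas the barcode statement requires that non-periods have a neighbourhood free of periods, i.e., that the set of periods of positive orbits of $X$ in class $\kappa$ is closed in $[0,\infty)$.

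The Arzel\`a--Ascoli step is fine (and note $Y$ is compact, so there is no need to hedge about the orbits lying in a common compact set). The only part where you are unnecessarily tentative is the step you call ``the principal obstacle.'' It is in fact short, given an identity the paper already establishes implicitly in the proof of Lemma~\ref{lemma:mu-approach-barcode}: inserting $X$ into the contact vector field equation yields $\d h(X)=h\,\d h(R^{\alpha})$, so $\Sigma=\set{h=0}$ is invariant under $X$. Consequently any closed orbit of $X$ meeting $\Sigma$ lies entirely in $\Sigma$. So if the $C^{1}$-limit $\gamma$ of the $s_{n}$-periodic positive orbits $\gamma_{n}$ is not a positive orbit, it lies in $\Sigma$, is closed, and is in class $\kappa$; non-resonance then gives $\d h(R^{\alpha})\neq 0$ along $\gamma$, and since $\gamma$ is connected, $\d h(R^{\alpha})$ has a fixed sign in a tubular neighbourhood $U$ of $\gamma$. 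For $n$ large, $\gamma_{n}\subset U$ with $h>0$ along $\gamma_{n}$, so $\d h(X)=h\,\d h(R^{\alpha})$ has a fixed nonzero sign along $\gamma_{n}$, meaning $h\circ\gamma_{n}$ is strictly monotone --- contradicting periodicity. This closes the gap cleanly; no separate technical lemma is needed. With this filled in, your proposal is a correct and complete proof of the corollary.
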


\paragraph{}

We prove Lemma \ref{lemma:mu-approach-barcode}.

\begin{proof}
  Without loss of generality, let us prove the statement for a single vector field $X$; the generalization to a family $X_{\sigma}$ follows the same exact argument.

  We suppose $X$ is non-resonant relative the contact form $\alpha$ and class $\kappa$. See \S\ref{sec:indep-choice-cont} for how to pick contact forms when generalizing to parametric families of vector fields.

  For the purposes of the proof, let us refer to a positive orbit of $X^{\alpha}_{\delta}$ in the class $\kappa$ and period at most $s_{0}$ which passes through the region where $h\le \delta$ an \emph{extra} orbit. We need to show there is some $\delta_{0}$ so that, for $\delta\le \delta_{0}$, $X^{\alpha}_{\delta}$ has no extra orbits.

  Let $V$ be the unique vector field tangent to the contact distribution so that:
  \begin{equation}\label{eq:contact-vf}
    X^{\alpha}_{\delta}=\mu_{\delta}(h)R^{\alpha}+\mu'_{\delta}(h)V.
  \end{equation}
  It follows from the construction that $V\in \xi$ satisfies:
  \begin{equation*}
    \d\alpha(V,-)+\d h-\d h(R^{\alpha})\alpha=0,
  \end{equation*}
  and, in particular, $V$ preserves the level sets of $h$. For notational concision in the rest of the proof, we will omit the contact form $\alpha$ from the notation and write $X^{\alpha}_{\delta}=X_{\delta}$.

  Since $\d h(X_{\delta})=\mu_{\delta}(h)\d h(R)$, any flow line $\gamma(s)$ of $X_{\delta}$ satisfies:
  \begin{equation*}
    \pd{}{s}\mu_{\delta}(h(\gamma(s))=\mu_{\delta}'(h)\d h(R)\mu_{\delta}(h(\gamma(s))).
  \end{equation*}

  In particular, any flow line with $\mu_{\delta}(h(\gamma(s)))\le \delta$ for some $s\in [0,s_{0}]$ must remain entirely in the region $\mu_{\delta}(h)\le e^{Cs_{0}}\delta$ where:
  \begin{equation*}
    C=\max\abs{\d h(R)}\ge \max\abs{\mu_{\delta}'(h)\d h(R)}.
  \end{equation*}
  From another point of view, if $\mu_{\delta}(h(\gamma(s)))>e^{Cs_{0}}\delta$ holds for some $s\in [0,s_{0}]$, then $\mu_{\delta}(h(\gamma(s)))\ge \delta$ holds for all $s\in [0,s_{0}]$, in which case $\gamma(s)$ is an trajectory of the original vector field $X$, since:
  \begin{equation*}
    \mu_{\delta}(h)\ge \delta\implies \mu_{\delta}(h)=h\text{ and }\mu_{\delta}'(h)=1.
  \end{equation*}
  Thus, every extra orbit of $X_{\delta}$ remains in the region where $\mu_{\delta}(h)\le e^{Cs_{0}}\delta$.

  Since $\d h(X_{\delta})=\mu_{\delta}(h)\d h(R)$, we can conclude that any extra orbit of $X_{\delta}$ remains in the region $h\ge -Ce^{Cs_{0}}\delta s_{0}$ or remains entirely in the region where $h\le 0$. This is because:
  \begin{equation*}
    \abs{\mu_{\delta}(h)\d h(R)}\le Ce^{Cs_{0}}\delta.
  \end{equation*}

  However, in the region where $h\le 0$, $X_{\delta}=(\delta/2)R$. Picking $\delta/2$ smaller than the minimal period of a Reeb orbit we can therefore conclude that for $\delta$ sufficiently small every extra orbit of $X_{\delta}$ remains in the region where:
  \begin{equation*}
    h\in [-Cs_{0}e^{Cs_{0}}\delta,e^{Cs_{0}}\delta]\implies \mu_{\delta}(h)\in [\delta/2,e^{Cs_{0}}\delta].
  \end{equation*}

  We conclude the proof with a compactness argument: suppose that $x_{n},\delta_{n}$ are sequences so that $\delta_{n}\to 0$, and $\gamma_{n}(s)$ is an extra orbit of $X_{\delta_{n}}$ with initial condition $\gamma_{n}(0)=x_{n}$; we will then derive a contradiction using the non-resonance condition. Let $s_{n}\in [0,s_{0}]$ be the period of $\gamma_{n}$, and introduce:
  \begin{equation*}
    \begin{aligned}
      a_{n}(t)&=s_{n}\mu_{\delta_{n}}(h(\gamma_{n}(s_{n}t))),\\
      b_{n}(t)&=s_{n}\mu_{\delta_{n}}'(h(\gamma_{n}(s_{n}t))).
    \end{aligned}
  \end{equation*}
  Consider the non-autonomous vector fields:
  \begin{equation*}
    F_{n,t}=a_{n}(t)R^{\alpha}+b_{n}(t)V\text{ and }G_{n,t}=b_{n}(t)V,
  \end{equation*}
  so that $\eta_{n}(t)=\gamma_{n}(s_{n}t)$ solves $\eta_{n}'(t)=F_{n,t}(\eta_{n}(t))$. By construction, $\eta_{n}$ is a $1$-periodic orbit of $F_{n,t}$.

  By passing to a subsequence, we may suppose that $x_{n}\in Y$ converges to $x_{\infty}\in \Sigma$ and $s_{n}$ converges to $s_{\infty}$. Let $g_{n}(t)$ be the flow line of $G_{n,t}$ starting at $x_{\infty}$. In suitable local coordinates we estimate:
  \begin{equation*}
    \pd{}{t}(\eta_{n}(t)-g_{n}(t))=a_{n}(t)R^{\alpha}(\eta_{n}(t))+b_{n}(t)[V(\eta_{n}(t))-V(g_{n}(t))].
  \end{equation*}
  Taking norms yields:
  \begin{equation*}
    \left|\pd{}{t}(\eta_{n}(t)-g_{n}(t))\right|\le C_{1}\delta+C_{2}\left|\eta_{n}(t)-g_{n}(t)\right|,
  \end{equation*}
  where we use $\abs{b_{n}(t)}\le s_{n}$ and $\abs{a_{n}(t)}\le s_{n}Ce^{s_{0}}\delta$; the constants $C_{1},C_{2}$ are allowed to depend on $s_{n}$ (which is bounded). A standard application of a Gronwall type inequality then implies that $\eta_{n}(t)$ and $g_{n}(t)$ remain close for all times $t\in [0,1]$; how close depends on how close $x_{n}$ is to $x_{\infty}$, the size of $s_{n}$, and how small $\delta$ is; see, e.g., \cite{teschl_ODE}.

  Since $g_{n}'(t)=b_{n}(t)V(g_{n}(t))$ and $b_{n}(t)\in [0,s_{n}]$, $g_{n}(t)$ is a reparametrized flow line of $V=X$ on $\Sigma$ of parameter length at most $s_{n}$. Moreover, $g_{n}(1)$ converges to $x_{\infty}$, and so $x_{\infty}$ lies on a closed orbit of $X$ of period at most $s_{\infty}$ on the dividing hypersurface. Here we use that a sequence of flow lines of $V$ with bounded parameter lengths converges in $C^{0}$ (after a subsequence) to a flow line.

  Moreover, a straightforward construction associates to $g_{n}(t)$ a free homotopy class for $n$ large enough; one simply takes a small geodesic ball around $x_{\infty}$ and connects $g_{n}(1)$ to $g_{n}(0)$. The resulting free homotopy class is preserved under reparametrization, and by comparison with $\eta_{n}(t)$ we conclude that this class lies in $\kappa$. It then follows that $x_{\infty}$ lies on a closed orbit on the dividing hypersurface in the class of $\kappa$.

  However, since $\eta_{n}(t)=\gamma_{n}(s_{n}t)$ is close to $g_{n}(t)$, the non-resonance implies that $\d h(R_{\alpha})>0$ or $\d h(R_{\alpha})<0$ holds along the orbit of $\gamma_{n}(s)$. Thus:
  \begin{equation*}
    \pd{}{s}h(\gamma_{n}(s))=\mu_{\delta}(h)\d h(R^{\alpha}),
  \end{equation*}
  and $\mu_{\delta}(sh)\ge \delta/2$, we conclude that $h(\gamma_{n}(1))\ne h(\gamma_{n}(0))$, i.e., $\gamma_{n}$ is not a closed orbit. This contradiction completes the proof.
\end{proof}

\subsection{Continuation maps}
\label{sec:continuation-maps}

The continuation maps we consider are fairly standard; given non-decreasing continuation data $X_{\sigma}$, see \S\ref{sec:continuation-data}, we consider an extension $X_{\sigma,t}$ to the filling as a family of time-dependent Hamiltonian vector fields, and then count the rigid Floer continuation cylinders:
\begin{equation*}
  \left\{
    \begin{aligned}
      &u:\R\times \R/\Z\to W,\\
      &\bd_{s}u+J(u)(\bd_{t}u-X_{\beta(-s),t}(u))=0;
    \end{aligned}
  \right.
\end{equation*}
here $\beta$ is a standard cut-off function so $\beta(-s)$ is non-increasing. Since $X_{\sigma}$ is non-decreasing, one therefore has a priori energy bounds and the relevant Floer theoretic arguments go through. The details of this assertion are covered elsewhere; see, e.g., \cite[\S4]{merry-uljarevic}, \cite[\S2.2.3]{uljarevic-ssh}, \cite[\S2.3.6]{brocic-cant-shelukhin}, \cite[\S2.3.3]{alizadeh-atallah-cant}, and \cite{floer89-comm-math-phys,salamon-zehnder,abouzaid_monograph,uljarevic-2017-JSG}.

\subsubsection{Continuation data}
\label{sec:continuation-data}

Let $X_{\sigma}$, $\sigma \in [0,1]$, be a smooth deformation of contact vector fields so that:
\begin{equation}
  \label{eq:nn-cont-data}
  \text{$\bd_{\sigma}X_{\sigma}$ is a non-negative contact vector field.}
\end{equation}
Note that:
\begin{equation*}
  h_{\sigma}=\alpha(X_{\sigma})\implies \pd{h_{\sigma}}{\sigma}=\alpha(\bd_{\sigma}X_{\sigma}),
\end{equation*}
so that \eqref{eq:nn-cont-data} is equivalent to requiring the contact Hamiltonians are non-decreasing.

There is a natural homotopy relation one can place on such deformations, asking they are homotopic relative their endpoints through deformations satisfying \eqref{eq:nn-cont-data}. Let us call such a homotopy class \emph{continuation data}. Note that the set of contact vector fields, with continuation data as morphisms, forms a category in a manner similar to the construction of the fundamental groupoid of a space. Moreover, condition \eqref{eq:nn-cont-data} ensures that, between any two objects $X_{0}$ and $X_{1}$ there is either a unique morphism or there is no morphism at all; the relevant criterion is whether or not the inequality $h_{0}\le h_{1}$ holds pointwise.

Associated to such continuation data, we define a morphism:
\begin{equation}\label{eq:morphism-pmod}
  \mathfrak{c}:HF(X_{0};s;\kappa)\to HF(X_{1};s;\kappa);
\end{equation}
these morphisms make $HF(X;s,\kappa)$ functorial when the set of contact vector fields is endowed with a category whose morphisms are continuation data (whose composition is given by the associative concatenation operation on continuation data). As a special case of this functoriality, \eqref{eq:morphism-pmod} induces a morphism of persistence modules $HF(X_{0};-;\kappa)\to HF(X_{1};-;\kappa)$.

\subsection{Definition of the cut-off continuation map}
\label{sec:defin-cont-map-Q}

We now explain how to use the general framework in \S\ref{sec:continuation-maps} to define continuation maps: $$Q(X_{0};s;\kappa)\to Q(X_{1};s;\kappa).$$

Let $X_{\sigma}$ be a one-parameter family of contact vector fields so that:
\begin{equation}\label{eq:semipositivity}
  X_{\sigma}>0\implies \bd_{\sigma}X_{\sigma}\ge 0.
\end{equation}
Recall $X^{\alpha}_{\sigma,\delta}$ is the contact vector field generated by $\mu_{\delta}(h_{\sigma})$ and $\alpha$. Since:
\begin{equation*}
  \bd_{\sigma}\mu_{\delta}(h_{\sigma})=\mu_{\delta}'(h_{\sigma})\bd_{\sigma}h_{\sigma},
\end{equation*}
it follows that $\bd_{\sigma}X^{\alpha}_{\sigma,\delta}\ge 0$ holds everywhere on $Y$. In this case there is a well-defined continuation map $HF(X^{\alpha}_{0,\delta};s;\kappa)\to HF(X^{\alpha}_{1,\delta};s;\kappa)$.

Since continuation maps commute with continuation maps, we can pass to the limit over $\delta$ and conclude there exists an induced morphism:
\begin{equation*}
  Q(X_{0};s;\kappa)\to Q(X_{1};s;\kappa).
\end{equation*}
That these morphisms turn $X\mapsto Q(X;-,\kappa)$ into a functor valued in the category of persistence modules is then a straightforward consequence of the general fact that continuation maps commute with continuation maps.

\subsection{Selective symplectic cohomology as a colimit}
\label{sec:select-sympl-cohom}

As explained in the introduction, for any compact domain $\Omega$ we may define:
\begin{equation*}
  Q(\Omega;\kappa)=\lim_{X}\colim_{s\to \infty}Q(X;s;\kappa).
\end{equation*}
The limit is over contact vector fields satisfying $\set{\alpha(X)\ge 0}=\Omega$ and so that $\bd \Omega$ is a regular level set of $\alpha(X)$; such vector fields are called \emph{adapted} to $\Omega$. The limit over $X$ is just a formality because the natural maps:
\begin{equation*}
  Q(\Omega;\kappa)\to \colim_{s\to \infty}Q(X;s;\kappa)
\end{equation*}
are isomorphisms for any fixed choice of $X$. Indeed, the morphisms used to compute the limit are canonical isomorphisms defined by interleaving cofinal sequences.

Arguments of a similar nature show that, for any inclusion of compact domains $\Omega_{1}\subset \Omega_{2}$ there is an associated continuation morphism:
\begin{equation*}
  Q(\Omega_{1};\kappa)\to Q(\Omega_{2};\kappa),
\end{equation*}
which is functorial with respect to iterated inclusions $\Omega_{1}\subset \Omega_{2}\subset \Omega_{3}$.

\subsection{Conjugation isomorphisms}
\label{sec:conj-isom}

We recall the definition of conjugation isomorphisms at the low level of Floer cohomology in $W$, and then explain how to extract natural isomorphisms $Q(\Omega;\kappa)\to Q(\psi(\Omega);\psi(\kappa))$.

\subsubsection{Definition at a low level}
\label{sec:definition-at-low}

Let $\psi$ be a contact-at-infinity symplectomorphism of $W$ and suppose that $\kappa$ is a saturated free homotopy class of orbits in $Y$. We do not assume that $\psi=\psi_{1}$ is the time-1 map of a contact-at-infinity isotopy.

This map induces a conjugation isomorphism: if $\varphi_{t}$ is any contact-at-infinity Hamiltonian system, then so is $\psi\varphi_{t}\psi^{-1},$ and their Floer cohomology groups are identified via $x\mapsto \psi(x)$; here $x$ is a fixed point of $\varphi_{1}$. See \cite[\S5]{uljarevic-ssh} for further discussion.

\subsubsection{Taking the limits and colimits}
\label{sec:taking-limits-colim}

The conjugation isomorphisms are natural (i.e., commutes with continuation morphisms), and hence induce limiting isomorphisms:
\begin{equation}\label{eq:conjugation-1}
  HF(X;s;\kappa)\to HF(\d\psi X\psi^{-1};s;\psi(\kappa)).
\end{equation}
Taking the colimit over $s$ yields the desired natural isomorphism:
\begin{equation}\label{eq:conjugation-2}
  Q(\Omega;\kappa)\to Q(\psi(\Omega);\psi(\kappa)).
\end{equation}
Note that in \eqref{eq:conjugation-1} and \eqref{eq:conjugation-2} only the ideal restriction of $\psi$ is involved in the objects. It seems to be an interesting question as to whether or not the morphism depends on the extension of $\psi$ to the filling $W$.

\section{Prequantizations of a symplectic ellipsoid}
\label{sec:prequant-sympl-ellip}

The goal in this section is to compute the $Q(\Omega;\kappa)$ groups for the prequantization domains introduced in \S\ref{sec:case-prequantization-darboux-ball}, \S\ref{sec:non-squeezing-via}, and \S\ref{sec:preq-ellips}.

\subsection{Geometric set-up}
\label{sec:geometric-set-up}

As described in \S\ref{sec:case-prequantization-darboux-ball} and \S\ref{sec:non-squeezing-via}, we are interested in certain continuation morphisms $Q(\Omega_{1};\kappa)\to Q(\Omega_{2};\kappa)$ when $\Omega_{i}$ are prequantizations of ellipsoids $\R/\Z\times E(a_{1},\dots,a_{n})$ embedded into the ideal boundary of a Liouville manifold $W$ as a tubular neighborhood of a transverse knot $K$.

As explained in the introduction, we assume that the saturation $\kappa$ of $K$ contains no other iterates of $K$. In particular, if a loop $\gamma$ is homotopic to $K$ inside of $W$ then $\gamma$ is non-contractible in $W$, so it is guaranteed that:
\begin{equation*}
  Q(X;0;\kappa)=0,
\end{equation*}
for any vector field adapted to $\Omega$.

\subsection{Examples of Liouville manifolds satisfying our requirements}
\label{sec:exampl-liouv-manif}

We now construct a Liouville manifold $W$ satisfying properties \ref{item:ns-1}, \ref{item:ns-2}, \ref{item:ns-3} from the statement of Theorem \ref{theorem:our-non-squeezing}.

We take $W$ to be a so-called divisor complement in a compact symplectic manifold $M$ which:
\begin{enumerate}
\item is symplectically aspherical,
\item has a vanishing first Chern class, i.e., there is a non-vanishing section of the complex determinant line $\det_{\C}(TM)$, and
\item has no torsion in its fundamental group;
\end{enumerate}
one can take, e.g., $M=T^{2n}$. Let us denote the divisor by $N\subset M$.

It is known that the ideal boundary $\bd W$ of the divisor complement is a prequantization of the divisor see, e.g., \cite{biran-lagrangian-barriers}. In particular, there is a circle bundle $\bd W\to N$ where $N$ is a compact symplectic manifold. Taking any Darboux ball $B(R)\subset N$ we obtain a fairly explicit embedding $\R/\Z\times B(R)\subset \bd W$. Moreover, by the divisor construction, the central knot $K$ actually bounds a disk in $M$ which intersects the divisor once transversally.

We establish property \ref{item:ns-1}. If $K^{i}$ and $K^{j}$ are homotopic in $W$, for $i\ne j \in \Z$, then there is a cylinder $C$ joining $K^{i}$ to $K^{j}$. Connect $C$ to disks in $M$ capping $K^{i}$ and $K^{j}$ which intersect the divisor $i$ times and $j$ times, respectively, to form a sphere $S$. By definition, the divisor is Poincaré dual to a non-zero multiple of the symplectic form in $M$, thus $S$ has symplectic area $k(i-j)$, where $k$ is a non-zero number. Since $M$ is assumed to be symplectically aspherical, we must have $i=j$, as desired.

From the construction it is immediate that $c_{1}(TW)=0$, i.e., \ref{item:ns-2} holds.

To establish \ref{item:ns-3}, we suppose, if $K\sim L^{k}$ for some other loop $L\subset \bd W$, then the inclusion of $L$ into $M$ is torsion (this is because the inclusion of $K$ into $M$ is contractible). By our assumption, this means that $L$ bounds a disk $D$ in $M$. This disk $D$ intersects the divisor $N$ some number of times, say $a\in \Z$. But since $L^{k}$ is homotopic to $K$ within $\bd W$, it follows that $K$ bounds a disk in $M$ which intersects $N$ $ka$ times. Because $M$ is aspherical it follows that $ka=1$, and hence $k=\pm 1$, as desired.

\subsection{A stability result}
\label{sec:stab-beyond-capac}

Recall the domains: $$\Omega(a_{1},\dots,a_{n})=\R/\Z\times E(a_{1},\dots,a_{n}),$$ with $a_{1}\le \dots \le a_{n}$. We will prove the following result which asserts that the $Q$ groups in class $\kappa$ stabilize:
\begin{lemma}
  Let $W,\kappa$ be as in \S\ref{sec:exampl-liouv-manif}. If $1<a_{1}\le b_{1}$ and $a_{j}\le b_{j}$ for all other $j$, then the continuation:
  \begin{equation*}
    Q(\Omega(a);\kappa)\to Q(\Omega(b);\kappa)
  \end{equation*}
  is an isomorphism.
\end{lemma}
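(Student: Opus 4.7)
The plan is to construct a non-decreasing interpolation between model vector fields adapted to $\Omega(a)$ and $\Omega(b)$, and then to apply the criterion of \S\ref{sec:criterion-for-isomorphism}. Set $c(\sigma) = (1-\sigma)a + \sigma b$ for $\sigma\in[0,1]$, and let $X_\sigma = X_{c(\sigma)}$ be the contact vector field of \S\ref{sec:preq-ellips} (extended from $\R/\Z\times B(R)$ to $\bd W$ by a $\sigma$-independent non-positive cut-off outside a fixed neighborhood of the ellipsoid). Its contact Hamiltonian on $\R/\Z\times B(R)$ is $h_\sigma = 1 - \sum \pi c_i(\sigma)^{-1}\abs{z_i}^2$. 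The family is non-decreasing: each $\sigma\mapsto c_i(\sigma)$ is affine and non-decreasing, so $\sigma\mapsto c_i(\sigma)^{-1}$ is non-increasing, whence $\bd_\sigma h_\sigma \ge 0$ pointwise. Moreover, each $X_\sigma$ is non-resonant relative $\kappa$ by the lemma of \S\ref{sec:preq-ellips}, since $c_1(\sigma)\ge a_1 > 1$ for all $\sigma$.

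The key step is to identify the positive orbits of $X_\sigma$ in class $\kappa$. Any closed orbit of $X_{c(\sigma)} = \bd_\theta - \sum c_i(\sigma)^{-1} V_i$ has a nonnegative integer winding number $k$ around the $\R/\Z$-factor, and is thus freely homotopic in $\bd W$ to the iterate $K^k$ of the central knot; the saturation assumption on $\kappa$ (which excludes all iterates of $K$ other than $K$ itself) forces $k = 1$. As in the proof of the lemma of \S\ref{sec:preq-ellips}, a period-$1$ orbit winds each $z_i$-coordinate by a total angle $2\pi/c_i(\sigma) < 2\pi$ in time one, so no non-central point can lie on such an orbit; hence the only positive orbit of $X_\sigma$ in class $\kappa$ is the central knot $K$ itself, which has period exactly $1$.

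In particular, for every $s\ne 1$ and every $\sigma\in[0,1]$, the vector field $X_\sigma$ has no positive orbit of period $s$ in class $\kappa$, so the criterion of \S\ref{sec:criterion-for-isomorphism} yields that the continuation map $Q(X_a;s;\kappa) \to Q(X_b;s;\kappa)$ is an isomorphism. The subset $\set{s\in[0,\infty) : s > 1}$ is cofinal and the continuation maps commute with the persistence-module structure maps, so passing to the colimit as $s\to\infty$ yields the desired isomorphism $Q(\Omega(a);\kappa) \to Q(\Omega(b);\kappa)$. The main obstacle is the orbit analysis of the middle paragraph; this is essentially a repackaging of the orbit analysis already carried out in \S\ref{sec:preq-ellips}, combined with the saturation assumption on $\kappa$, so no new difficulty arises beyond what has been dealt with in the paper.
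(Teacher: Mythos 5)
Your proposal is correct and follows essentially the same route as the paper's proof: the same affine interpolation $c(\sigma)=(1-\sigma)a+\sigma b$, the same appeal to the non-resonance lemma of \S\ref{sec:preq-ellips}, the same orbit/winding analysis showing no positive orbits of period $s>1$ arise in $\kappa$, and the same invocation of Lemma~\ref{lemma:mu-approach-barcode} via the criterion of \S\ref{sec:criterion-for-isomorphism} followed by passage to the colimit over $s$. The only cosmetic difference is that you spell out the monotonicity $\partial_\sigma h_\sigma\ge 0$ and pin down the period of the central orbit as exactly $1$ rather than just noting, as the paper does, that no orbits of period $>1$ lie in $\kappa$; both lead to the same isomorphism for all $s>1$ and hence to the colimit statement.
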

\begin{proof}
  As explained in \S\ref{sec:preq-ellips}, the hypersurfaces $\bd \Omega((1-\sigma)a+\sigma b)$ are non-resonant relative $\kappa$ for each $\sigma\in [0,1]$. The vector fields $X_{(1-\sigma)a+\sigma b}$ from \S\ref{sec:preq-ellips} are adapted to $\Omega((1-\sigma)a+\sigma b)$ and never develop any orbits in the class of $\kappa$ with period $s>1$. Therefore Lemma \ref{lemma:mu-approach-barcode} implies the cut-off versions of $X_{(1-\sigma)a+\sigma b}$ never develop any orbits in the class of $\kappa$ with period $s>1$, and hence the continuation maps:
  \begin{equation*}
    Q(X_{a};s;\kappa)\to Q(X_{b};s;\kappa)
  \end{equation*}
  are isomorphisms as long as $s>1$; see \S\ref{sec:criterion-for-isomorphism} for related discussion.
\end{proof}

\subsection{A non-vanishing result}
\label{sec:non-vanishing-result}

Our next result shows that $Q(\Omega(a);\kappa)$ is non-zero, and for simplicity we work in the stable range $1<a_{1}$.

\begin{lemma}\label{lemma:non-vanishing-result}
  For $1<a_{1}$, there is an isomorphism $Q(\Omega(a);\kappa)\simeq \Z/2\oplus \Z/2.$
\end{lemma}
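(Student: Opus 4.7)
The plan is to compute $Q(\Omega(a); \kappa)$ using the explicit contact vector field $X_a = \partial_\theta - \sum a_i^{-1} V_i$ from \S\ref{sec:preq-ellips}, which is adapted to $\Omega(a)$ and non-resonant relative $\kappa$. First I would verify that the only closed orbit of $X_a$ in class $\kappa$ is the central knot $K = \R/\Z \times \set{0}$, of period exactly $1$. Along $K$ the vector field reduces to $\partial_\theta$, giving an orbit of period $1$. For any other closed orbit, some $z_i$ is nonzero, so closing up forces both $s \in \Z$ (from the $\theta$-component) and $s/a_i \in \Z$ (from the rotation in the $z_i$-plane at angular speed $2\pi/a_i$); since $a_i > 1$ the smallest common solution has $s \ge 2$, putting the orbit outside $\kappa$ by the assumption that no iterate of $K$ other than $K$ itself lies in $\kappa$.

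Once this orbit structure is established, Theorem \ref{theorem:pmod-barcode} guarantees that the continuation morphism $Q(X_a; s; \kappa) \to Q(X_a; s'; \kappa)$ is an isomorphism for any $1 < s \le s' < \infty$; hence $Q(\Omega(a); \kappa) \simeq Q(X_a; s_0; \kappa)$ for any fixed $s_0 \in (1, 2)$. To compute the right-hand side I fix such an $s_0$ and examine the inverse limit $\lim_{\delta \to 0} HF(X^\alpha_{a, \delta}; s_0; \kappa)$. By Lemma \ref{lemma:mu-approach-barcode}, for all sufficiently small $\delta$ every orbit of $X^\alpha_{a, \delta}$ in class $\kappa$ with period at most $s_0$ is an orbit of $X_a$ lying in $\set{h_a \ge \delta}$, and the only such orbits are the $S^1$-family of reparametrizations of $K$.

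To finish the computation I would identify the contribution of this Morse-Bott family as $H^*(S^1; \Z/2) \simeq \Z/2 \oplus \Z/2$. After a small $S^1$-symmetry-breaking perturbation of the contact Hamiltonian (for example, adding a perfect Morse function on $S^1$ pulled back by the $\theta$-coordinate along $K$) the family is replaced by two transversally non-degenerate $1$-periodic orbits whose Conley-Zehnder indices differ by one, and the Floer differential between them vanishes for degree reasons. Thus $HF(X^\alpha_{a, \delta}; s_0; \kappa) \simeq \Z/2 \oplus \Z/2$ for every sufficiently small $\delta$, and the isomorphism survives both the $\delta \to 0$ inverse limit and the $s \to \infty$ colimit.

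The main obstacle is the Morse-Bott Floer computation, which rests on two ingredients. First, transverse non-degeneracy of the $S^1$-family of reparametrizations of $K$: the linearized Poincaré return map of the time-$1$ flow of $X_a$ at a point of $K$ acts on each transverse $z_i$-plane by rotation through angle $-2\pi/a_i$, which is not the identity since $a_i > 1$, so the eigenspace for eigenvalue $1$ is exactly the tangent line to $K$. Second, the identification of the local Floer contribution with $H^*(S^1; \Z/2)$; this is standard in Morse-Bott Floer theory within the framework of \cite{djordjevic_uljarevic_zhang, cant-sh-barcode}, though some care is needed to adapt it to the cut-off setting $X^\alpha_{a, \delta}$ and to ensure that the perturbation is compatible with the continuation-map structure used in the $\delta$-inverse limit.
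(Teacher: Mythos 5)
Your approach differs from the paper's in a genuine and interesting way, and is largely sound, but contains one small error and misses what the paper's route buys for later use.

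The paper does not perturb the Morse--Bott circle directly. Instead it embeds $\R/\Z\times B(R)$ into $\C^{n+1}$ via the contact-type map $\iota(\theta,z)=(e^{2\pi i\theta}/\pi^{1/2},z)$, which turns the $S^{1}$-family of $1$-periodic orbits along $K$ into a single \emph{non-degenerate} fixed point of a linear flow at $0\in\C^{n+1}$. It then identifies the local Floer contribution as the cone of a continuation map $HF(Y_{0})\to HF(Y_{1})$ across the parameter value where the orbit develops; the two endpoint groups are each $\Z/2$, supported in Conley--Zehnder degrees differing by $2$ (because $\lfloor f(\sigma)\rfloor$ jumps), so the continuation is forced to vanish by degree and the cone has rank $2$. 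Your route instead does the local computation in place by breaking the $S^{1}$-symmetry with a Morse function on $K$ and identifying the contribution with $H^{*}(S^{1};\Z/2)$. Both compute the same local Floer cohomology of the transversally non-degenerate circle and are valid, modulo the technical input you correctly flag (Morse--Bott perturbation in the cut-off setting); the paper discharges the analogous technical step by citing a locality lemma for local Floer cohomology from forthcoming joint work.

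One concrete error: you write that the Floer differential between the two perturbed orbits ``vanishes for degree reasons.'' It does not; you yourself say their Conley--Zehnder indices differ by one, and the cohomological differential shifts degree by one, so a degree argument cannot rule out a differential. The differential actually vanishes because the two gradient trajectories on $S^{1}$ from max to min cancel mod $2$ (the usual Morse--Bott cancellation), or equivalently because the local Floer group must have rank at least $\dim H^{*}(S^{1})=2$. The paper's $\C^{n+1}$ computation sidesteps this subtlety precisely because in the enlarged space the relevant degrees differ by $2$, so a genuine degree argument applies. Finally, note that the paper's cone formulation is not merely a matter of taste: the identical continuation-map set-up is reused in \S\ref{sec:vanishing-result} to prove Lemma~\ref{lemma:vanishing}, where one needs to track the graded pieces of the cones to see that a map of $\Z/2\oplus\Z/2$ to $\Z/2\oplus\Z/2$ is zero. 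Your direct Morse--Bott computation would need to be supplemented with index bookkeeping to serve the same purpose there.
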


Before we present the proof, we introduce some preliminary notions.

\subsubsection{Local Floer cohomology}
\label{sec:local-floer-cohom}

Let $X_{\sigma}$ be a \emph{non-decreasing} family of contact vector fields, defined for $\sigma\in [0,1]$, and suppose that $X_{\sigma}$ develops a simple transversally non-degenerate $1$-periodic positive orbit $\Gamma$ when $\sigma=1/2$.

Moreover, suppose that $\Gamma$ is the only $1$-periodic orbit in the free homotopy class $\kappa$, and $\bd_{\sigma} X_{\sigma}$ is strictly positive along $\Gamma$.

Define the \emph{local Floer cohomology} of $\Gamma$ in the class $\kappa$ to be the cone of the continuation map:
\begin{equation*}
  CF(X_{0};\kappa)\to CF(X_{1};\kappa);
\end{equation*}
here the Floer complexes are defined by certain (time-dependent) extensions of the vector fields to the filling, and measure the fixed points of the time-1 maps; see \cite{cant-ulja-zhang} for further discussion.

\begin{lemma}
  If $\Gamma$ is simple and transversally non-degenerate, and $W$ is a Liouville manifold, then the local Floer cohomology of $\Gamma$ depends only on the restriction of $X_{\sigma}$ to a small open neighborhood of $\Gamma$.

  More precisely, if $W_{i}$, $X^{i}_{\sigma}$, $\Gamma_{i}$, $i=1,2$, are data as above, and there is a contactomorphism between tubular neighborhoods of $\Gamma_{i}\subset Y_{i}$ which identifies the contact vector fields $X^{i}_{\sigma}$ in those neighborhoods, then the local Floer cohomologies associated to the two paths are isomorphic as vector spaces over $\Z/2$.
\end{lemma}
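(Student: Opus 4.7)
The plan is to show that the local Floer cohomology is determined by the restriction of the continuation data to an arbitrarily small neighborhood of $\Gamma$, and then to invoke the contactomorphism to conclude.

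First, I would perturb $X_\sigma$ in a small neighborhood $U$ of $\Gamma$ to obtain a non-degenerate continuation problem. Since $\Gamma$ is simple and transversally non-degenerate, a standard Morse--Bott perturbation produces finitely many non-degenerate $1$-periodic orbits, all supported in $U$, which persist only for $\sigma$ in a small interval around $1/2$. Because $\Gamma$ is the only $1$-periodic orbit in class $\kappa$ at $\sigma=1/2$, these are the only generators contributing nontrivially to the cone $\mathrm{Cone}(CF(X_0;\kappa)\to CF(X_1;\kappa))$; the other generators in class $\kappa$ are identified between $X_0$ and $X_1$ via their continuation, so their contributions cancel.

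Next, I would show that all continuation cylinders between the new generators remain in a prescribed neighborhood $\tilde U\subset W$ of $\Gamma$. The key ingredients are (i) the maximum principle, which keeps Floer cylinders in a fixed compact region of the Liouville manifold $W$; (ii) the non-decreasing condition $\bd_\sigma X_\sigma\ge 0$, which bounds the energy of continuation cylinders by the $C^0$-norm of the perturbation on the filling extension and can be made arbitrarily small; (iii) a monotonicity lemma for $J$-holomorphic curves, forcing cylinders of sufficiently small energy with both asymptotes near $\Gamma$ to stay in a small neighborhood of $\Gamma$ and its Liouville cap. Together these yield a ``local subcomplex'' of the cone.

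Finally, I would use the contactomorphism $\phi$ between tubular neighborhoods of $\Gamma_1$ and $\Gamma_2$ to identify the local continuation data, after extending $\phi$ along the symplectization collar inside each $W_i$ in a neighborhood of the ideal boundary. Choosing compatible almost complex structures $J_i$ so that their restrictions to the localized region agree under $\phi$, we obtain a bijection between the moduli spaces of local continuation cylinders, hence a chain isomorphism between the local subcomplexes and, in cohomology, the desired $\Z/2$-vector space isomorphism. The main obstacle is the localization in the second step: in a general Liouville manifold, a Floer cylinder joining orbits near $\Gamma$ could a priori wander deep into the filling before closing up, and ruling this out via the combined energy-and-monotonicity estimate is a routine but delicate argument; the implementation in the contact-at-infinity Hamiltonian setting of the present paper is carried out in \cite{cant-ulja-zhang}, which the statement explicitly references.
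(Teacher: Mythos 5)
The paper does not actually prove this lemma; the paragraph you are being compared against is a literature survey that defers the argument to the cited references and to the forthcoming joint work \cite{cant-ulja-zhang}. So there is no internal proof to match your sketch against. On its own merits, your outline follows the expected route --- Morse--Bott perturbation, confinement of Floer continuation cylinders via an a priori energy bound plus monotonicity, and then transport of the local data across the contactomorphism after extending it to a collar in the filling --- and you correctly flag the confinement step as where the real work lives.

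Two imprecisions are worth fixing if this were to be fleshed out. First, the energy of a continuation cylinder is not controlled directly by the $C^{0}$-norm of the perturbation on the filling; it is controlled by the action gap between its asymptotic orbits, and it is only because all the newly created orbits near $\Gamma$ have action within $O(\|f\|_{C^{0}})$ of that of $\Gamma$ (using transverse non-degeneracy) that the bound you want follows. Second, the assertion that ``the other generators in class $\kappa$ are identified between $X_{0}$ and $X_{1}$ \dots so their contributions cancel'' is not the right mechanism. The paper's setup (see \S\ref{sec:proof-lemma-vanishing}) computes the cone as the quotient of $CF(X_{1};\kappa)$ by a subcomplex image of $CF(X_{0};\kappa)$, chosen so that precisely the generators born near $\Gamma$ survive the quotient; establishing that the continuation chain map can be arranged to be such an inclusion of a subcomplex, and that it commutes with the localization, is itself part of the argument, not a consequence of generators ``cancelling.'' Neither of these is a fatal flaw, but as written the justification for locality is circular --- it silently presumes the confinement it is meant to establish.
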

\begin{proof}
  Such a result, relating the cone of a continuation morphism to a local Floer cohomology, is well-known. The notion of local Floer cohomology goes back to \cite{floer89-JDG,floer89-comm-math-phys,salamon-zehnder,cieliebak-floer-hofer-wysocki-1996,pozniak}, and is well studied when $\Gamma$ is an isolated set; see \cite{ginzburg-gurel-gt-09,ginzburg-gurel-JSG-2010,ginzburg-gurel-conley,shelukhin-zhao,shelukhin-hofer-zehnder,atallah-shelukhin-20}; some of these references discuss the local Floer cohomology for more general sets $\Gamma$. The work of \cite{cieliebak-floer-hofer-wysocki-1996,bourgeois-oancea-invent-09,bourgeois-oancea-duke-09} describes the local Floer cohomology for a transversally non-degenerate orbit $\Gamma$ using Morse-Bott methods; see also \cite{mclean-local-floer-GT-12,fehnder-local-symplectic-homology-2020,ginzburg-gurel-zeit-2020}. The proof of the stated Lemma appears in the upcoming joint work \cite{cant-ulja-zhang} in a more general context.
\end{proof}

\subsubsection{Proof of Lemma \ref{lemma:non-vanishing-result}}
\label{sec:proof-of-non-vanishing}

The argument uses local Floer cohomology.
\begin{proof}
  Fix $1<a_{1}\le \dots \le a_{n}\le R$, for some large $R>0$, and consider the family of Hamiltonian functions on $\C^{n+1}$:
  \begin{equation*}
    G_{\sigma}:=f(\sigma)\pi \abs{z_{0}}^{2}-(1+\epsilon)\sum_{j=1}^{n} \pi a_{j}^{-1}\abs{z_{j}}^{2},
  \end{equation*}
  where $f(\sigma)$ increases from $1-\epsilon$ to $1+\epsilon$ with a single transverse crossing at $f(1/2)=1$. Let $Y_{\sigma}$ denote the corresponding Hamiltonian vector field.

  Since $a_{j}$ are greater than $1$, the ideal restriction of $Y_{\sigma}$ has no $1$-periodic orbits except for when $\abs{z_{1}}=\dots=\abs{z_{n}}=0$, as long as $\epsilon$ is small enough. On that complex line (parametrized by $z_{0}$), $Y_{\sigma}$ acts as a rotation by total angle $2\pi f(\sigma)$, and therefore has a $1$-periodic orbit if and only if $f(\sigma)\in \Z$, which holds only when $f(1/2)=1$ by assumption. Therefore the hypotheses of the local Floer cohomology set-up apply.

  Our goal is to use $Y_{\sigma}$ to compute the local Floer cohomology of the orbit which develops at $\sigma=1/2$. Then we will implant this local model inside of the ideal boundary of the other Liouville manifold $W$ to compute $Q(\Omega(a);\kappa)$.

  For $\sigma\ne 1/2$, $Y_{\sigma}$ has a single non-degenerate orbit at $0\in \C^{n+1}$. Therefore the Floer cohomology is easily computed: $HF(Y_{\sigma})\simeq \Z/2$ for $\sigma=0,1$. Moreover, a computation of the Conley-Zehnder indices shows that the index of the orbit at $0$ is equal to:
  \begin{equation*}
    n+1+2\lfloor f(\sigma)\rfloor+2\sum_{j=1}^{n}\lfloor 1/a_{j}\rfloor=n+1+2\lfloor f(\sigma)\rfloor;
  \end{equation*}
  see, e.g., \cite[\S1.2.4]{cant-sh-barcode}. In particular, since the Conley-Zehnder index of the orbit at $0$ is different for $\sigma=0,1$, it follows that $HF(Y_{0})\to HF(Y_{1})$ is zero, and hence the local homology of the orbit which develops at $\sigma=1/2$ must be $\Z/2\oplus \Z/2$.

  Now consider the contact type embedding:
  \begin{equation*}
    \left\{
      \begin{aligned}
        &\R/\Z\times B(R)\to \C^{n+1},\\
        &\iota:(\theta,z)\mapsto (\frac{e^{2\pi i\theta }}{\pi^{1/2}},z);
      \end{aligned}
    \right.
  \end{equation*}
  this embedding is ``contact type'' because $\iota^{*}\Lambda=\d\theta+\lambda$ where $\Lambda$ is the radial Liouville form on $\C^{n+1}$; in particular, if we project $\iota$ to the ideal boundary of $\C^{n+1}$ it becomes a contact embedding of the prequantization into $S^{2n+1}$.

  With respect to this embedding, the contact Hamiltonian generating the ideal restriction of $Y_{\sigma}$ is: $$g_{\sigma}=G_{\sigma}\circ \iota=f(\sigma)-(1+\epsilon)\sum_{j=1}^{n}\pi a_{j}^{-1}\abs{z_{j}}^{2}.$$

  The single $1$-periodic orbit that $Y_{\sigma}$ develops is $\Gamma=\iota(\R/\Z\times\set{0})$; therefore we can use the above computation of the cone of $CF(Y_{0})\to CF(Y_{1})$ to conclude that the local homology of $\Gamma$ is $\Z/2\oplus \Z/2$.

  In other words, if there is some other contactomorphism $\R/\Z\times B(R)\to \bd W$, for another Liouville manifold, then any non-decreasing family of contact vector fields $X_{\sigma}$ on $\bd W$ such that:
  \begin{enumerate}
  \item $X_{\sigma}=Y_{\sigma}$ when pulled back to $\R/\Z\times B(r)$,
  \item the only $1$-periodic orbit $X_{\sigma}$ develops in the class of $\kappa$ is the central orbit $\R/\Z\times \set{0}$ at time $\sigma=1/2$ (which must exist since $X_{\sigma}=Y_{\sigma}$ holds on a neighborhood of this orbit),
  \end{enumerate}
  will necessarily induce a continuation map whose cone is $\Z/2\oplus \Z/2$ (in the class $\kappa$). We also note that $r$ can be taken as small as desired.

  Let $W$ be as in the statement, namely, suppose there is an embedding of $\R/\Z\times B(R)$ whose central orbit has a non-trivial saturation $\kappa$.

  For simplicity, let us suppose $a_{n}<R$. Pick a contact form $\alpha$ on $\bd W$ which agrees with $\alpha=\d\theta+\lambda$ in the neighborhood $\R/\Z\times W$, and let $X_{\sigma}=Y_{\sigma}$ in this neighborhood. Extend $X_{\sigma}$ to the complement as a negative contact vector field; this extension will not matter at all, since we are interested in the cut-off version $X_{\sigma,\delta}^{\alpha}$.

  Note that $\alpha(X_{\sigma})=0$ if and only if:
  \begin{equation*}
    \sum_{j=1}^{n}\frac{(1+\epsilon)\pi\abs{z_{j}}^{2}}{a_{j}f(\sigma)}=1,
  \end{equation*}
  and if $1+\epsilon<a_{1}f(\sigma)$, then we know from \S\ref{sec:preq-ellips} that this dividing hypersurface is non-resonant relative $\kappa$. Thus if $(1+\epsilon)<a_{1}(1-\epsilon)$, the dividing hypersurface will be non-resonant; this can certainly be achieved by taking $\epsilon$ small enough.

  Then we can apply the results in \S\ref{sec:role-non-resonance} to conclude that for $\delta$ sufficiently small the only $1$-periodic orbits $X_{\sigma,\delta}^{\alpha}$ develops in the class of $\kappa$ are the positive $1$-periodic orbits of $X_{\sigma}$ in the class of $\kappa$. By the locality of local Floer cohomology, it follows that the cone of a chain-level continuation morphism:
  \begin{equation*}
    CF(X_{0,\delta}^{\alpha};1;\kappa)\to CF(X_{1,\delta}^{\alpha};1;\kappa)
  \end{equation*}
  is $\Z/2\oplus \Z/2$. Since $X_{0}$ is non-resonant relative $\kappa$ and $\eta X_{0}$ never develops positive orbits in the class $\kappa$ for $\eta\in [0,1]$, we conclude that:
  \begin{equation*}
    HF(X_{0,\delta}^{\alpha};1;\kappa)=0,
  \end{equation*}
  and hence $HF(X_{1,\delta}^{\alpha};1;\kappa)\simeq \Z/2\oplus \Z/2$. However:
  \begin{equation*}
    X_{1,\delta}^{\alpha}=(1+\epsilon)X_{a,\delta}^{\alpha},
  \end{equation*}
  where $X_{a}$ is the standard contact-vector field adapted to $\Omega(a)$. Thus:
  \begin{equation*}
    Q(X_{a};1+\epsilon;\kappa)\simeq HF(X_{1,\delta}^{\alpha};1;\kappa)\simeq \Z/2\oplus \Z/2,
  \end{equation*}
  where we are assuming that $\delta$ is sufficiently small.

  Finally, since $X_{a}$ is non-resonant and never develops orbits in the class $\kappa$ with period bigger than $1$, we have:
  \begin{equation*}
    Q(\Omega(a);\kappa)\simeq Q(X_{1};1+\epsilon;\kappa)\simeq \Z/2\oplus \Z/2.
  \end{equation*}
  This completes the proof of the existence of the isomorphism.
\end{proof}

\subsubsection{Application to Theorem \ref{theorem:existence}}
\label{sec:appl-theor-existence}

At this point, we have proved everything necessary to conclude Theorem \ref{theorem:existence}, i.e., the existence of positive orbits for any contact vector field which is adapted to a domain $\Omega\subset \R/\Z\times \C^{n}$ with non-resonant boundary and which contains $\R/\Z\times B(1)$ in its interior.

\subsection{A vanishing result}
\label{sec:vanishing-result}

Using the set-up in \S\ref{sec:non-squeezing-via}, we have:
\begin{lemma}\label{lemma:vanishing}
  The continuation map:
  \begin{equation*}
    Q(\Omega(c,R,\dots);\kappa)\to Q(\Omega(a,R,\dots);\kappa)
  \end{equation*}
  is zero if $c<1<a\le R$.
\end{lemma}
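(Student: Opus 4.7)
The plan is to compute $Q(\Omega(c,R,\ldots);\kappa)$ by replicating the local Floer cohomology calculation of Lemma~\ref{lemma:non-vanishing-result} with first axis $c<1$, and then to observe that the two generators sit in Conley-Zehnder degrees strictly higher than those of $Q(\Omega(a,R,\ldots);\kappa)$. Since $c_{1}(TW)=0$ and $\kappa$ is supported on iterates of a primitive knot, the selective Floer cohomology carries a well-defined $\Z$-grading preserved by continuation maps, and a grading argument will then force the continuation to vanish.

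First I would verify that the entire setup of Lemma~\ref{lemma:non-vanishing-result} extends to the case $a_{1}=c<1$. The adapted vector field $X_{c}$ from \S\ref{sec:preq-ellips} remains non-resonant relative $\kappa$ for generic $c<1$: on $\bd E(c,R,\ldots)$ the orbits of $X_{c}$ wind in the $z_{1}$-direction at rate $-c^{-1}\notin \Z$, so no characteristic orbit closes up in class $\kappa$. I would then replay the local model construction using
\[
G^{c}_{\sigma} = f(\sigma)\pi\abs{z_{0}}^{2} - (1+\epsilon)\Bigl(c^{-1}\pi\abs{z_{1}}^{2} + R^{-1}\sum_{j\geq 2}\pi\abs{z_{j}}^{2}\Bigr)
\]
on $\C^{n+1}$, pulled back via the contact-type embedding $\iota$. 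As in Lemma~\ref{lemma:non-vanishing-result}, the only $1$-periodic class-$\kappa$ orbit of $G^{c}_{\sigma}$ is the central orbit, which develops at the bifurcation $\sigma=1/2$, and the cone of the continuation across the bifurcation yields $Q(\Omega(c,R,\ldots);\kappa)\simeq \Z/2\oplus\Z/2$.

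Next I would compare Conley-Zehnder indices. By the formula cited in the proof of Lemma~\ref{lemma:non-vanishing-result}, the index of the central orbit under $G^{c}_{\sigma}$ equals $n+1+2\lfloor f(\sigma)\rfloor + 2\lfloor 1/c\rfloor$, while under the analogous family $G^{a}_{\sigma}$ it equals $n+1+2\lfloor f(\sigma)\rfloor$ (since $\lfloor 1/a\rfloor = 0$ for $a>1$). Because $c<1$ forces $\lfloor 1/c\rfloor \geq 1$, the two generators of $Q(\Omega(c,R,\ldots);\kappa)$ sit in degrees at least $2$ higher than those of $Q(\Omega(a,R,\ldots);\kappa)$. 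Each $Q$-group is concentrated in two adjacent degrees (the Morse-Bott contribution of an $S^{1}$-family of orbits), so the shift of at least $2$ places source and target in disjoint degree ranges, and the grading-preserving continuation map must be zero.

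The main technical obstacle I anticipate is rigorously carrying out the first step: one must redo the non-resonance verification and the no-extraneous-orbit argument of Lemma~\ref{lemma:mu-approach-barcode} when $c<1$, since the proof in \S\ref{sec:preq-ellips} explicitly used $a_{1}>1$. The key observation is that although the rotation in the $z_{1}$-direction now exceeds $2\pi$ in unit time, the only $1$-periodic orbit of $X_{c}$ in class $\kappa$ remains the central one (the iterates of this rotation carrying the orbit out of $\kappa$ by the saturation hypothesis), so the local Floer machinery still applies verbatim.
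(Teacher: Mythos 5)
Your proposal takes essentially the same route as the paper: compute both $Q$-groups as cones of local Floer cohomology using the $\C^{n+1}$ local model, observe the Conley-Zehnder indices of the source and target are shifted by at least $2$ because $\lfloor (1+\epsilon)/c\rfloor\geq 1$ for $c<1$, invoke $c_1(TW)=0$ to propagate this grading comparison to $W$, and conclude the grading-preserving continuation map between groups concentrated in disjoint degree ranges must vanish. The paper packages this slightly differently by introducing a two-parameter family $G_{\eta,\sigma}$ and a commutative square of Floer groups, which makes it transparent that the map $Q(E;\kappa)\to Q(F;\kappa)$ really is the induced map on cones; your separate computations require the same underlying naturality, but the content is identical. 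The one place you should be more careful than ``generic $c<1$'' is that the paper explicitly reduces to $c\in(1/2,1)$ (the general case then follows by factoring through a larger ellipsoid with first axis in $(1/2,1)$), which automatically rules out $c^{-1}\in\Z$, keeps $\lfloor (1+\epsilon)/c\rfloor=1$, and makes the non-resonance check and the stabilization of the persistence module beyond $s=1$ unconditional rather than dependent on a genericity argument.
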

This is used to show the non-squeezing result, Theorem \ref{theorem:our-non-squeezing}. Let us abbreviate $E=\Omega(c,R,\dots)$ and $F=\Omega(a,R,\dots)$. Without loss of generality, we will assume that $c\in (1/2,1)$ and $R$ is much larger than $1$.

\subsubsection{Local Floer homology for the ellipsoid $E$}
\label{sec:local-for-E}

In order to prove the vanishing result, we will again appeal to the local Floer homology. Arguing as in \S\ref{sec:proof-of-non-vanishing} one obtains:
\begin{equation*}
  Q(X_{E};1+\epsilon;\kappa)\simeq \text{local Floer homology of }X_{E,\sigma,\delta}^{\alpha}\text{ in class $\kappa$},
\end{equation*}
where $X_{E,\sigma}$ is the path of contact vector fields whose contact Hamiltonians are:
\begin{equation*}
  f(\sigma)-(1+\epsilon)\bigg(\frac{\pi \abs{z_{1}}^{2}}{c}+\sum_{j>1}\frac{\pi \abs{z_{j}}^{2}}{R}\bigg),
\end{equation*}
where $f(\sigma)$ increases from $1-\epsilon$ to $1+\epsilon$. Here we note that when $\sigma=1$ we have that $X_{E,\sigma}=(1+\epsilon)(X_{E})$ where $X_{E}$ is adapted to $E$. We briefly recall the construction.

First, one shows that $X_{E,\sigma}$ is non-resonant relative $\kappa$ provided the Hamiltonian system generated by:
\begin{equation*}
  H_{\sigma}=\sum\frac{\pi \abs{z_{j}}^{2}}{a_{j}(\sigma)}
\end{equation*}
has no closed orbits of period $1$ where:
\begin{equation*}
  a_{1}(\sigma)=\frac{cf(\sigma)}{1+\epsilon}\text{ and }a_{j}(\sigma)=\frac{Rf(\sigma)}{1+\epsilon};
\end{equation*}
this is equivalent to requiring that $a_{j}(\sigma)$ does not lie in $\set{1,1/2,1/3,\dots}$. Since $c\in (1/2,1)$, we can pick $\epsilon$ small enough so that:
\begin{enumerate}
\item $cf(\sigma)/(1+\epsilon)\in (1/2,1)$ and
\item $Rf(\sigma)/(1+\epsilon)>1$
\end{enumerate}
hold for all $\sigma$. This implies $X_{E,\sigma}$ is non-resonant, and the same argument given in \S\ref{sec:proof-of-non-vanishing} shows the local Floer homology of $X^{\alpha}_{E,\sigma,\delta}$ is supported on the single $1$-periodic orbit $\Gamma=\set{z=0}$ and is isomorphic to $Q(X_{E};1+\epsilon;\kappa)$.

The same argument given in the proof of \S\ref{lemma:non-vanishing-result} shows that there is a commutative square:
\begin{equation*}
  \begin{tikzcd}
    {\Z/2\oplus \Z/2\simeq Q(X_{E};1+\epsilon;\kappa)}\arrow[d,"{}"]\arrow[r,"{}"] &{Q(E;\kappa)}\arrow[d,"{}"]\\
    {\Z/2\oplus \Z/2\simeq Q(X_{F};1+\epsilon;\kappa)}\arrow[r,"{}"] &{Q(F;\kappa)},
  \end{tikzcd}
\end{equation*}
where the horizontal morphisms are isomorphisms. Even though both groups are isomorphic to $\Z/2\oplus \Z/2$, the vertical morphism is actually zero in this case. Roughly speaking, this is due to a shift in the Conley-Zehnder indices.

\subsubsection{Proof of lemma \ref{lemma:vanishing}}
\label{sec:proof-lemma-vanishing}

The rough outline of the argument is straightforward: one computes the induced map between cones working first in $\C^{n+1}$, and shows it vanishes for index reasons. One then uses the assumption that $c_{1}(TW)$ vanishes to deduce the map between cones in the other Liouville manifold $W$ also vanishes.

\begin{proof}
  In $\C^{n+1}$, we work with the Hamiltonians:
  \begin{equation*}
    G_{\eta,\sigma}=f(\sigma)\pi \abs{z_{0}}^{2}-(1+\epsilon)\bigg(\frac{\pi\abs{z_{1}}^{2}}{a_{1}(\eta)}+\sum_{j>1}\frac{\pi\abs{z_{j}}^{2}}{R}\bigg),
  \end{equation*}
  and let $Y_{\eta,\sigma}$ denote the corresponding Hamiltonian vector fields. Here:
  \begin{enumerate}
  \item $f$ increases from $1-\epsilon$ to $1+\epsilon$, and
  \item $\eta$ increases from $c$ to $a$.
  \end{enumerate}
  The restriction of $Y_{\eta,\sigma}$ to the contact-type hypersurface $\pi\abs{z_{0}}^{2}=1$, as in Lemma \ref{lemma:non-vanishing-result}, has the local model:
  \begin{equation}\label{eq:two-param-local-model}
    g_{\eta,\sigma}=f(\sigma)-(1+\epsilon)\bigg(\frac{\pi \abs{z_{1}}^{2}}{a_{1}(\eta)}+\sum_{j>1}\frac{\pi \abs{z_{j}}^{2}}{R}\bigg).
  \end{equation}
  Importantly, we observe that $Y_{0,\sigma}$ and $Y_{1,\sigma}$ only develop a $1$-periodic orbit when:
  \begin{equation*}
    \text{$f(\sigma)=1$ and $z_{1}=\dots=z_{n}=0$,}
  \end{equation*}
  which happens when $\sigma=1/2$.

  Since $\bd_{\eta}G_{\eta,\sigma}$ and $\bd_{\sigma}G_{\eta,\sigma}$ are non-negative, it follows that one has a square of continuation maps:
  \begin{equation*}
    \begin{tikzcd}
      {HF(Y_{0,0})}\arrow[d,"{}"]\arrow[r,"{}"] &{HF(Y_{0,1})}\arrow[d,"{}"]\\
      {HF(Y_{1,0})}\arrow[r,"{}"] &{HF(Y_{1,1})},
    \end{tikzcd}
  \end{equation*}

  The four groups $HF(Y_{0,0})$, $HF(Y_{1,0})$, $HF(Y_{0,1})$ and $HF(Y_{1,1})$ are all isomorphic to $\Z/2$. Moreover, each group is supported in a specific Conley-Zehnder index:
  \begin{equation*}
    CZ(Y_{\sigma,\eta})=n+1+2\lfloor f(\sigma)\rfloor+2\lfloor a_{1}(\eta)^{-1}(1+\epsilon)\rfloor;
  \end{equation*}
  as argued in the proof of Lemma \ref{lemma:non-vanishing-result}.

  We digress for a moment on the grading conventions in a cone. Our cones are computed as follows: find a chain level representative of a continuation morphism which is the inclusion of a subcomplex and then take the quotient complex. To compute the cone of $Y_{0,0}\to Y_{1,0}$, we therefore interpolate in such a way so that the generator of $Y_{0,0}$ survives the continuation morphism; let us call $Y_{\sigma,0}'$ the resulting deformation. Then $CF(Y_{1,0}')$ will necessarily have additional generators compared to $CF(Y_{1,0})$ because some are needed to cancel the generators from $CF(Y_{0,0})$. Since the cohomological differential decreases the Conley-Zehnder index, these new generators have index equal to $CZ(Y_{0,0})+1$. The cohomology of $Y_{1,0}$ also injects into the cone, and this contribution lives in the graded piece $CZ(Y_{0,0})+2$. In this way, we have shown that the local Floer cohomology of the path $Y_{\sigma,0}$ equals $\Z/2$ in each graded piece $CZ(Y_{0,0})+i$, $i=1,2$.

  The same argument proves that the local Floer cohomology of the path $Y_{\sigma,1}$ equals $\Z/2$ in each graded piece $CZ(Y_{0,1})+i$, $i=1,2$. However, we have computed that $CZ(Y_{0,0})-CZ(Y_{0,1})=2$ and so the cones are supported in different graded pieces.

  Because the relative Conley-Zehnder index in $\C^{n}$ is a local quantity, it follows that the cones of $Y_{0,\sigma,\delta}^{\alpha}$ and $Y_{1,\sigma,\delta}^{\alpha}$ on $S^{2n-1}$ are also supported in different graded pieces. This is because the paths $Y_{i,\sigma,\delta}^{\alpha}$ and $Y_{i,\sigma}$ compute the same local Floer cohomology, for $i=0,1$.

  Consider another Liouville manifold $W$ with $\R/\Z\times B(R)\subset W$, so that:
  \begin{enumerate}
  \item the class $\kappa$ of the central knot $K$ contains no other iterates of $K$,
  \item $c_{1}(TW)=0$,
  \end{enumerate}
  Let $X_{\eta,\sigma}$ be a vector field which agrees with local model \eqref{eq:two-param-local-model} in $\R/\Z\times B(R)$, and is non-positive outside, and let $X^{\alpha}_{\eta,\sigma,\delta}$ be the cut-off version, using a contact form $\alpha$ which agrees with the standard form on $\R/\Z\times B(R)$.

  For $\delta$ sufficiently small, the cones of $X^{\alpha}_{i,\sigma,\delta}$, $i=0,1$ are local Floer cohomologies of the orbits which develop along the central kont $z_{1}=\dots=z_{n}=0$. Moreover, the cones are supported in different graded pieces; this is because of our computation in $\C^{n}$ and because the relative Conley-Zehnder indices in a manifold with $c_{1}(TW)=0$ depend only on a neighborhood of the orbits.

  Therefore the continuation morphism:
  \begin{equation*}
    Q(X_{0,1};1;\kappa)\to Q(X_{1,1};1;\kappa)
  \end{equation*}
  vanishes. By similar arguments to the ones encountered in the proof of Lemma \ref{lemma:non-vanishing-result}, we have that:
  \begin{enumerate}
  \item $Q(X_{0,1};1;\kappa)\simeq Q(X_{E};1+\epsilon;\kappa)\simeq Q(E;\kappa)$,
  \item $Q(X_{1,1};1;\kappa)\simeq Q(X_{F};1+\epsilon;\kappa)\simeq Q(F;\kappa)$,
  \end{enumerate}
  via continuation morphisms, and hence $Q(E;\kappa)\to Q(F;\kappa)$ vanishes. This completes the proof of Lemma \ref{lemma:vanishing}.
\end{proof}

\subsection{Proof of the non-squeezing statement}
\label{sec:proof-non-squeezing}
We have proved everything needed to deduce Theorem \ref{theorem:our-non-squeezing} in the case where the squeezings $\psi$ extend to the filling $W$. The argument follows formally from the natural conjugation action.

With a bit more work, one can remove the assumption that the squeezing $\psi$ extends to $W$. The argument then goes as follows: consider a one-parameter family of vector fields $X_{\sigma}$ whose contact Hamiltonian equals:
\begin{equation*}
  h_{\sigma}=1+\frac{\pi \abs{z}^{2}}{(1-s)a+sR}
\end{equation*}
in the neighborhood $\R/\Z\times B(R)$ and which is negative outside the neighborhood. By our assumptions, this family is non-resonant relative $\kappa$. It follows easily that $\psi_{*}X_{\sigma}$ is again a non-resonant family relative the free homotopy class containing $\psi(K)$. Now we invoke the final assumption:
\begin{enumerate}[label=($\ast$)]
\item the knot $K$ is primitive
\end{enumerate}
Since $\psi(K)\subset \R/\Z\times B(R)$, $K$ is homotopic to an iterate of the central knot; but, by the primitive assumption, we must have $\psi(K)\sim K^{\pm 1}$. Think of $\psi$ as defining another contactomorphic embedding of $\R/\Z\times B(R)$ into $\bd W$; if $\psi$ sends $K$ onto $K^{-1}$, then we can simply precompose $\psi|_{\R/\Z\times B(R)}$ with the anticontact involution $(\theta,z)\mapsto (-\theta,\bar{z})$; after precomposing, if necessary, we also ensure that the neighborhood $\psi(\R/\Z\times B(R))$ respects the coorientation.

Our arguments apply equally well to this new embedding $\psi(\R/\Z\times B(R))$, and so we conclude that the continuation morphism:
\begin{equation*}
  Q(\psi(\Omega(a));\kappa)\to Q(\psi(\Omega(R));\kappa).
\end{equation*}
is an isomorphism using the non-resonant deformation as explained above. Moreover, $Q(\psi(\Omega(a));\kappa)$ is non-zero, using the same local Floer homology ideas as in the proof that $Q(\Omega(a);\kappa)$ is non-zero. However, by assumption, $$\psi(\Omega(a))\subset E(c,R,\dots)\subset E(a,R,\dots)\subset \psi(\Omega(R)),$$
and since the continuation morphism:
\begin{equation*}
  Q(E(c,R,\dots);\kappa)\to Q(E(a,R,\dots);\kappa)
\end{equation*}
has been shown to vanish we obtain the desired contradiction, concluding the proof of Theorem \ref{theorem:our-non-squeezing}.

\bibliographystyle{alpha}
\bibliography{citations}

\begin{thebibliography}{CFHW96}

\bibitem[AAC23]{alizadeh-atallah-cant}
H.~Alizadeh, M.~S. Atallah, and D.~Cant.
\newblock Lagrangian intersections and the spectral norm in convex-at-infinity
  symplectic manifolds.
\newblock arXiv:2312.14752, 2023.

\bibitem[Abo15]{abouzaid_monograph}
M.~Abouzaid.
\newblock Symplectic cohomology and {V}iterbo's theorem.
\newblock In {\em Free Loop Spaces in Geometry and Topology}, pages 271--486.
  European Mathematical Society, 2015.

\bibitem[AFM15]{albers-fuchs-merry}
P.~Albers, U.~Fuchs, and W.~J. Merry.
\newblock Orderability and the {W}einstein conjecture.
\newblock {\em Compositio Math.}, 151:2251--2272, 2015.

\bibitem[AS20]{atallah-shelukhin-20}
M.~S. Atallah and E.~Shelukhin.
\newblock {H}amiltonian no-torsion.
\newblock arXiv:2008.11758, 2020.

\bibitem[BCS24]{brocic-cant-shelukhin}
F.~Bro{\'c}i{\'c}, D.~Cant, and E.~Shelukhin.
\newblock The chord conjecture for conormal bundles.
\newblock arXiv:2401.08842, 2024.

\bibitem[Bir01]{biran-lagrangian-barriers}
P.~Biran.
\newblock Lagrangian barriers and symplectic embeddings.
\newblock {\em Geom. funct. anal.}, 11:407--464, 2001.

\bibitem[BO09a]{bourgeois-oancea-invent-09}
F.~Bourgeois and A.~Oancea.
\newblock An exact sequence for contact- and symplectic homology.
\newblock {\em Invent. Math.}, 175:611--680, 2009.

\bibitem[BO09b]{bourgeois-oancea-duke-09}
F.~Bourgeois and A.~Oancea.
\newblock Symplectic homology, autonomous {H}amiltonians and {M}orse-{B}ott
  moduli spaces.
\newblock {\em Duke Math. J.}, 146(1):71--174, 2009.

\bibitem[Can23a]{cant_nonsqueezing}
D.~Cant.
\newblock Contact non-squeezing and orderability via the shape invariant.
\newblock {\em Int. J. Math.}, 35(3), 2023.

\bibitem[Can23b]{cant-sh-barcode}
D.~Cant.
\newblock {S}helukhin's {H}ofer distance and a symplectic cohomology barcode
  for contactomorphisms.
\newblock arXiv:2309.00529, 2023.

\bibitem[CFHW96]{cieliebak-floer-hofer-wysocki-1996}
K.~Cieliebak, A.~Floer, H.~Hofer, and K.~Wysocki.
\newblock Applications of symplectic homology {II}: Stability of the action
  spectrum.
\newblock {\em Math. Z.}, 223:27--45, 1996.

\bibitem[CHK23]{cant-hedicke-kilgore}
D.~Cant, J.~Hedicke, and E.~Kilgore.
\newblock Extensible positive loops and vanishing of symplectic cohomology.
\newblock arXiv:2311.18267, December 2023.

\bibitem[CUZ24]{cant-ulja-zhang}
D.~Cant, I.~Uljarevi{\'c}, and J.~Zhang.
\newblock In preparation, 2024.

\bibitem[DUZ23]{djordjevic_uljarevic_zhang}
D.~Djordjevi{\'c}, I.~Uljarevi{\'c}, and J.~Zhang.
\newblock Quantitative characterization in contact {H}amiltonian dynamics -
  {I}.
\newblock arXiv:2309.00527, 2023.

\bibitem[EG91]{eliashberg-gromov-AMS-1991}
Y.~Eliashberg and M.~Gromov.
\newblock Convex symplectic manifolds.
\newblock In {\em Several complex variables and complex geometry, part 2},
  volume~52 of {\em Proc. Sympos. Pure Math.}, pages 135--162. AMS, 1991.

\bibitem[EKP06]{ekp}
Y.~Eliashberg, S.~S. Kim, and L.~Polterovich.
\newblock Geometry of contact transformations and domains: orderability versus
  squeezing.
\newblock {\em Geom. Topol.}, 10(3):1635--1747, 2006.

\bibitem[EP22]{eliashberg-pancholi-arXiv-2022}
Y.~Eliashberg and D.~Pancholi.
\newblock {H}onda-{H}uang's work on contact convexity revisted.
\newblock arXiv:2207.07185, 2022.

\bibitem[Fen20]{fehnder-local-symplectic-homology-2020}
E.~Fender.
\newblock Local symplectic homology of {R}eeb orbits.
\newblock arXiv:2010.01438, 2020.

\bibitem[Flo89a]{floer89-comm-math-phys}
A.~Floer.
\newblock Symplectic fixed points and holomorphic spheres.
\newblock {\em Comm. Math. Phys.}, 120(4):575--611, 1989.

\bibitem[Flo89b]{floer89-JDG}
A.~Floer.
\newblock Witten's complex and infinite-dimensional {M}orse theory.
\newblock {\em J. Differential Geom.}, 30(1):207--221, 1989.

\bibitem[FSZ23]{fraser-sandon-zhang}
M.~Fraser, S.~Sandon, and B.~Zhang.
\newblock Contact non-squeezing at large scale via generating functions.
\newblock arXiv:2310.11993, 2023.

\bibitem[GG09]{ginzburg-gurel-gt-09}
V.~L. Ginzburg and B.~Z. G\"urel.
\newblock Action and index spectra and periodic orbits in {H}amiltonian
  dynamics.
\newblock {\em Geom. Topol.}, 13:2745--2805, 2009.

\bibitem[GG10]{ginzburg-gurel-JSG-2010}
V.~L. Ginzburg and B.~Z. G{\"u}rel.
\newblock Local {F}loer homology and the action gap.
\newblock {\em J. Symplectic Geom.}, 8(3):323--357, 2010.

\bibitem[GG12]{ginzburg-gurel-conley}
V.~L. Ginzburg and B.~Z. G{\"u}rel.
\newblock Conley conjecture for negative monotone symplectic manifolds.
\newblock {\em Int. Math. Res. Not.}, 8:1748--1767, 2012.

\bibitem[GG20]{ginzburg-gurel-zeit-2020}
V.~L. Ginzburg and B.~Z. G\"urel.
\newblock {L}usternik-{S}chnirelmann theory and closed {R}eeb orbits.
\newblock {\em Math. Zeitschrift}, 295:515--582, 2020.

\bibitem[Gin05]{ginzburg-2005-weinstein}
V.~Ginzburg.
\newblock The {W}einstein conjecture and theorems of nearby and almost
  existence.
\newblock In {\em The Breadth of Symplectic and Poisson Geometry: Festschrift
  in Honor of Alan Weinstein}, Progr. Math., pages 139--172. Birkh{\"a}user,
  2005.

\bibitem[Gir91]{giroux-CMH-1991}
E.~Giroux.
\newblock Convexité en topologie de contact.
\newblock {\em Comm. Math. Helvet.}, 66:637--677, 1991.

\bibitem[HH19]{honda-huang-arXiv-2019}
K.~Honda and Y.~Huang.
\newblock Convex hypersurface theory in contact topology.
\newblock arXiv:1907.06025, 2019.

\bibitem[HZ90]{hofer-zehnder-90}
H.~Hofer and E.~Zehnder.
\newblock Symplectic invariants and {H}amiltonian dynamics.
\newblock {\em Analysis et cetera}, pages 405--427, 1990.

\bibitem[Kup94]{kuperberg-annals}
K.~Kuperberg.
\newblock A smooth counterexample to the {S}eifert conjecture.
\newblock {\em Ann. of Math.}, 140:723--732, 1994.

\bibitem[McL12]{mclean-local-floer-GT-12}
M.~McLean.
\newblock Local {F}loer homology and infinitely many simple {R}eeb orbits.
\newblock {\em Geom. Topol.}, 12(4):1901--1923, 2012.

\bibitem[MN18]{meiwes-naef-hypertight}
M.~Meiwes and K.~Naef.
\newblock Translated points on hypertight contact manifolds.
\newblock {\em J. Topol. Anal.}, 10(2):289--322, 2018.

\bibitem[MU19]{merry-uljarevic}
W.~J. Merry and I.~Uljarevi\'c.
\newblock Maximum principles in symplectic homology.
\newblock {\em Israel J. Math.}, 229:39--65, 2019.

\bibitem[Po{\'{z}}99]{pozniak}
M.~Po{\'{z}}niak.
\newblock {F}loer homology, {N}ovikov rings and clean intersections.
\newblock In {\em Nothern California Symplectic Geometry Seminar}, volume 196
  of {\em Transl. Ser. 2}, pages 119--181. Amer. Math. Soc., 1999.
\newblock {\itshape Based on 1994 PhD Thesis}.

\bibitem[PRSZ20]{persistence_book}
L.~Polterovich, D.~Rosen, K.~Samvelyan, and J.~Zhang.
\newblock {\em Topological persistence in geometry and analysis}, volume~74.
\newblock AMS, 2020.

\bibitem[PS16]{polterovich_shelukhin_persistence_1}
L.~Polterovich and E.~Shelukhin.
\newblock Autonomous {H}amiltonian flows, {H}ofer's geometry, and persistence
  modules.
\newblock {\em Sel. Math. New. Ser.}, 22:227--296, 2016.

\bibitem[Sal22]{salamon-lecture-notes-convex-hypersurfaces}
D.~Salamon.
\newblock Notes on hypersurfaces in contact manifolds.
\newblock ETH Z\"urich, July 2022.
\newblock {\itshape Hosted on personal website}.

\bibitem[San11]{sandon-ann-inst-four-2011}
S.~Sandon.
\newblock Contact homology, capacity and non-squeezing in {$\R^{2n}\times S^1$}
  via generating functions.
\newblock {\em Ann. Inst. Fourier}, 61(1):145--185, 2011.

\bibitem[Sch00]{schwarz-pjm-2000}
M.~Schwarz.
\newblock On the action spectrum for closed symplectically aspherical
  manifolds.
\newblock {\em Pacific J. Math.}, 193:419--461, 2000.

\bibitem[She22]{shelukhin-hofer-zehnder}
E.~Shelukhin.
\newblock On the {H}ofer-{Z}ehnder conjecture.
\newblock {\em Ann. Math.}, 195:775--839, 2022.

\bibitem[SZ92]{salamon-zehnder}
D.~Salamon and E.~Zehnder.
\newblock Morse theory for periodic solutions of {H}amiltonian systems and the
  {M}aslov index.
\newblock {\em Comm. Pure Appl. Math.}, XLV:1303--1360, 1992.

\bibitem[SZ19]{shelukhin-zhao}
E.~Shelukhin and J.~Zhao.
\newblock The {$\Z/p\Z$}-equivariant product-isomorphism in fixed point {F}loer
  homology.
\newblock J. Symplectic Geom., 2019.
\newblock to appear. arXiv:1905.03666.

\bibitem[Tes12]{teschl_ODE}
G.~Teschl.
\newblock {\em Ordinary Differential Equations and Dynamical Systems}, volume
  140 of {\em Graduate Studies in Mathematics}.
\newblock AMS, 2012.

\bibitem[Ulj17]{uljarevic-2017-JSG}
I.~Uljarevi{\'c}.
\newblock {F}loer homology of automorphisms of {L}iouville domains.
\newblock {\em J. Symplectic Geom.}, 15(3):861--903, 2017.

\bibitem[Ulj23]{uljarevic-ssh}
I.~Uljarevi\'c.
\newblock Selective symplectic homology with applications to contact
  non-squeezing.
\newblock {\em Compositio Math.}, 159(11):2458--2482, 2023.

\bibitem[UZ22]{ulja-zhang}
I.~Uljarevi\'c and J.~Zhang.
\newblock {H}amiltonian perturbations in contact {F}loer homology.
\newblock {\em J. Fixed Point Theory Appl.}, 24(71):1--20, 2022.

\bibitem[Vit99]{viterbo_functors_and_computations_1}
C.~Viterbo.
\newblock Functors and computations in {F}loer homology with applications, {I}.
\newblock {\em Geom. Funct. Anal.}, 9(5):985--1033, 1999.

\end{thebibliography}
\end{document}